\newcommand*{\email}[1]{\href{mailto:#1}{\nolinkurl{#1}} } 
    \newcommand{\titletext}{Random Gradient-Free Optimization in Infinite Dimensional Spaces}
    \newcommand{\authorsabbreviated}{C. Peixoto, D. Csillag, B. F. P. da Costa, Y. F. Saporito}
    \newcommand{\EMApname}{School of Applied Mathematics, Getulio Vargas Foundation}
    \crefname{hypothesis}{Hypothesis}{Hypotheses}
    \crefname{fact}{Fact}{Facts}
    \newcommand{\titletext}{Random Gradient-Free Optimization \\ in Infinite Dimensional Spaces}
    \newcommand{\authorsabbreviated}{C. Peixoto, D. Csillag, B. F. P. da Costa, Y. F. Saporito}
    \newcommand{\EMApname}{School of Applied Mathematics, Getulio Vargas Foundation}
    \newtheorem{remark}{Remark}
    \newtheorem{theorem}{Theorem}
    \newtheorem{proposition}{Proposition}
    \newtheorem{lemma}{Lemma}
    \newtheorem{corollary}{Corollary}
\title{\titletext\thanks{%
CP was partially supported by CAPES through the CAPES/PICME master's scholarship and FGV EMAp during his work in this manuscript. YFS was supported by FAPERJ (Brasil) through the Jovem Cientista do Nosso Estado Program (E-26/204.207/2025 (272760)) and by CNPq (Brasil) through the Productivity in Research Scholarship (305159/2025-9).
}}
\author{
Caio Peixoto\thanks{\EMApname, Rio de Janeiro, Brazil,  \email{caio.peixoto@fgv.br}, \email{daniel.csillag@fgv.br}, \email{bernardo.paulo@fgv.br}, \email{yuri.saporito@fgv.br}.}
\and Daniel Csillag$^\dagger$\!\!\!%
\and Bernardo F. P. da Costa$^\dagger$\!\!\!%
\and Yuri F. Saporito$^\dagger$
}
\DeclareMathOperator{\diag}{diag}
\def\cref@getref#1#2{%
  \expandafter\let\expandafter#2\csname r@#1@cref\endcsname%
  \expandafter\expandafter\expandafter\def%
    \expandafter\expandafter\expandafter#2%
    \expandafter\expandafter\expandafter{%
      \expandafter\@firstoffive#2}}%
\def\cpageref@getref#1#2{%
  \expandafter\let\expandafter#2\csname r@#1@cref\endcsname%
  \expandafter\expandafter\expandafter\def%
    \expandafter\expandafter\expandafter#2%
    \expandafter\expandafter\expandafter{%
      \expandafter\@secondoffive#2}}%
   \def\label@noarg#1{%
    \cref@old@label{#1}%
    \@bsphack%
    \edef\@tempa{{page}{\the\c@page}}%
    \setcounter{page}{1}%
    \edef\@tempb{\thepage}%
    \expandafter\setcounter\@tempa%
    \cref@constructprefix{page}{\cref@result}%
    \protected@write\@auxout{}%
      {\string\newlabel{#1@cref}{{\cref@currentlabel}%
      {[\@tempb][\arabic{page}][\cref@result]\thepage}{}{}{}}}%
    \@esphack}%
  \def\label@optarg[#1]#2{%
    \cref@old@label{#2}%
    \@bsphack%
    \edef\@tempa{{page}{\the\c@page}}%
    \setcounter{page}{1}%
    \edef\@tempb{\thepage}%
    \expandafter\setcounter\@tempa%
    \cref@constructprefix{page}{\cref@result}%
    \protected@edef\cref@currentlabel{%
      \expandafter\cref@override@label@type%
        \cref@currentlabel\@nil{#1}}%
    \protected@write\@auxout{}%
      {\string\newlabel{#2@cref}{{\cref@currentlabel}%
      {[\@tempb][\arabic{page}][\cref@result]\thepage}{}{}{}}}%
    \@esphack}%
\newcommand{\defeq}{\triangleq}
\newcommand{\prob}{\mathbb{P}}
\newcommand{\ind}{\mathbf{1}}
\newcommand{\mean}{\mathbb{E}}
\newcommand{\trp}{\text{\tiny\sffamily T}}
\DeclarePairedDelimiter\abs{\lvert}{\rvert}%
\DeclarePairedDelimiter\norm{\lVert}{\rVert}%
\DeclarePairedDelimiter\dotprod{\langle}{\rangle}
\DeclareMathOperator{\closure}{cl}
\let\oldabs\abs
\def\abs{\@ifstar{\oldabs}{\oldabs*}}
\let\oldnorm\norm
\def\norm{\@ifstar{\oldnorm}{\oldnorm*}}
\newcommand{\bzero}{\mathbf 0}
\newcommand{\ddrm}{\mathrm{d}}
\newcommand{\drm}{\gap \mathrm{d}}
\DeclareMathOperator*{\argmin}{arg\,min}
\DeclareMathOperator{\trace}{tr}
\DeclareMathOperator{\Trace}{Tr}
\newcommand{\Bcal}{\mathcal{B}}
\newcommand{\Ecal}{\mathcal{E}}
\newcommand{\Fcal}{\mathcal{F}}
\newcommand{\Hcal}{\mathcal{H}}
\newcommand{\Ncal}{\mathcal{N}}
\newcommand{\Scal}{\mathcal{S}}
\newcommand{\bbE}{\mathbb{E}}
\newcommand{\bbH}{\mathbb{H}}
\newcommand{\bbN}{\mathbb{N}}
\newcommand{\bbP}{\mathbb{P}}
\newcommand{\bbR}{\mathbb{R}}
\newcommand{\bA}{\mathbf{A}}
\newcommand{\bB}{\mathbf{B}}
\newcommand{\bC}{\mathbf{C}}
\newcommand{\bD}{\mathbf{D}}
\newcommand{\bI}{\mathbf{I}}
\newcommand{\bL}{\mathbf{L}}
\newcommand{\bM}{\mathbf{M}}
\newcommand{\bP}{\mathbf{P}}
\newcommand{\bQ}{\mathbf{Q}}
\newcommand{\bR}{\mathbf{R}}
\newcommand{\bS}{\mathbf{S}}
\newcommand{\bT}{\mathbf{T}}
\newcommand{\bc}{\boldsymbol{c}}
\newcommand{\be}{\mathbf{e}}
\newcommand{\bp}{\boldsymbol{p}}
\newcommand{\bw}{\boldsymbol{w}}
\newcommand{\bx}{\boldsymbol{x}}
\newcommand{\by}{\boldsymbol{y}}
\newcommand{\bz}{\boldsymbol{z}}
\newcommand{\risk}{\mathcal{R}}
\newcommand{\hstar}{h^{\star}}
\newcommand{\ustar}{u^{\star}}
\renewcommand{\hat}{\widehat}
\newcommand{\ghat}{\hat{g}}
\newcommand{\gbar}{\bar{g}}
\newcommand{\Htilde}{\tilde{H}}
\newcommand{\Omegatilde}{\tilde{\Omega}}
\newcommand{\hhat}{\hat{h}}
\newcommand{\Cminushalf}{C^{-\frac{1}{2}}}
\newcommand{\Chalf}{C^{\frac{1}{2}}}
\newcommand{\Cinv}{C^{-1}}
\newcommand{\xbar}{\bar{x}}
\newcommand{\Xbar}{\bar{X}}
\newcommand{\basis}{\mathcal{E}}
\newcommand{\prebasis}{\mathcal{B}}
\newcommand{\Kdist}{\bbP_{K}}
\newcommand{\vdist}{\bbP_v}
\newcommand{\vdistgivenK}{\bbP_{v\mid K}}
\newcommand{\vdistgivenk}{\bbP_{v\mid k}}
\newcommand{\gap}{\hspace{1.3pt}}
\newcommand{\kernel}{\mathrm{Matern}}
\DeclareMathOperator{\spann}{span}
\DeclareMathOperator{\Cov}{Cov}
\DeclareMathOperator{\EnforceBC}{EnforceBC}
\DeclareMathOperator{\PoissonPofTwo}{PoissonP2}
\DeclareMathOperator{\Poisson}{Poisson}
\newcommand{\Lip}{\mathrm{Lip}}
\newcommand{\superdomain}{\Xi}
\renewcommand{\algorithmiccomment}[1]{\hfill\texttt{/\kern-.25em/} \kern-.1em #1}
\newcommand{\algoname}{Functional Gradient-Free Descent}
\newcommand{\algoacronym}{FGFD}
\date{December 2025}
\definecolor{blue1}{HTML}{42d4f4}
\definecolor{blue2}{HTML}{4363d8}
\definecolor{blue3}{HTML}{000075}
\definecolor{magma1}{rgb}{0.232077, 0.059889, 0.437695}
\definecolor{magma2}{rgb}{0.716387, 0.214982, 0.47529}
\definecolor{magma3}{rgb}{0.994738, 0.62435, 0.427397}
\begin{document}

\maketitle

\begin{abstract}
    We propose a new gradient-free method for infinite-dimensional optimization in Hilbert spaces that requires only the computation of directional derivatives. Though functional optimization is often solved through finite-dimensional gradient descent over a parametrization, such as neural networks, we instead propose to leverage the functional nature of the optimization problem to enable provable guarantees. However, infinite-dimensional gradients are often hard to compute in practice, rendering naïve functional gradient descent intractable. To overcome this limitation, our framework leverages only directional derivatives and a pre-basis for the Hilbert space, i.e., a linearly independent set whose span is dense. This resolves the tractability issue, as pre-bases are much more accessible than full orthonormal bases or reproducing kernels --- which may not even exist --- and individual directional derivatives can be computed using automatic differentiation. We showcase the use of our method to solve partial differential equations \emph{à la} physics-informed neural networks (PINNs), where it effectively enables provable convergence.
\end{abstract}

\section{Introduction}
\label{sec:introduction}

The solution of functional optimization problems is core to many applications, with prominent examples including machine learning tasks such as supervised learning and generative modeling, optimization-based PDE solvers and broader inverse problems. Formally speaking, these are of the form
\begin{equation}
    \label{eq:main problem}
    \argmin_{h \in \Hcal} \risk(h),
\end{equation}
where $\Hcal$ is a separable Hilbert space (e.g., $L^2$ space, or a Sobolev space), and $\risk : \Hcal \to \bbR$ is a Fréchet differentiable risk functional, often convex.

To solve problems like \eqref{eq:main problem}, practitioners typically introduce a finite-dimensional parametrization $\theta \mapsto h_\theta$ of the functions $h \in \Hcal$ and then optimize over $\theta \in \bbR^D$ using some variant of gradient descent.
This approach, however, is limited by the representation capacity of the chosen parametrization; moreover, because the parametrization is typically nonlinear (e.g., neural networks), even if $\risk$ is convex, the composition $\theta \mapsto \risk(h_\theta)$ is most likely not, increasing the complexity of the resulting optimization problem.
To overcome these issues, we perform gradient-based optimization \emph{directly in the function space} $\Hcal$, since its Hilbert space structure gives the existence of gradients as Riesz representers of Fréchet derivatives.
By doing so, we may retain convexity of the risk and also avoid the limited representation power of any specific parametrization.

However, gradient-based optimization in infinite-dimensional Hilbert spaces is challenging.
For one, functional gradients themselves are often hard to compute exactly, being defined as solutions to variational problems involving the directional derivatives of the risk.
Secondly, it is not generally clear how to represent infinite dimensional objects computationally.
Thirdly, in the realm of stochastic gradient methods, controlling the variance of an infinite dimensional random element usually requires more care than that of a finite dimensional random vector.

In this paper, we adopt the functional optimization perspective, but we bypass exact gradient computation by taking inspiration from random gradient-free optimization methods.
This is a class of algorithms that build stochastic gradient estimators based only on directional derivative computations, which are readily available through automatic differentiation.
We adapt this framework to be applicable in infinite dimensions, yielding \algoname\ (\algoacronym), a general algorithm for minimizing Fréchet differentiable risk functionals over separable Hilbert spaces, with convergence guarantees in the convex case.
\looseness=-1

\subsection{A motivating example: solving PDEs through gradient descent}
\label{sec:motivation}

Consider solving the following PDE with Dirichlet boundary conditions:
\begin{equation}
    \label{eq:motivating pde}
    \begin{cases}
        L[u](\bx) = 0 \quad &\bx \in \Omega, \\
        u(\bx) = f(\bx) \quad &\bx \in \partial \Omega,
    \end{cases}
\end{equation}
where $\Omega \subset \bbR^d$ is a bounded, open set with a smooth boundary and $L$ is an $\ell$-th order differential operator.
For simplicity, in this section we will assume that $L$ is a linear operator with bounded coefficients.
Recently, a fast-growing body of research \cite{cuomo2022, raissi2024pinn, pinn, dgm2018} has been dedicated to study deep learning algorithms that find approximate solutions to \eqref{eq:motivating pde}.
These algorithms work by minimizing a functional of the form\looseness=-1
\begin{equation}
    \label{eq:motivating example risk}
    \risk(h) = \frac{1}{2} \norm{L[h]}_{L^2(\Omega)}^2 + \frac{1}{2} \norm*{h - f}_{L^2(\partial \Omega)}^2,
\end{equation}
where $h = h_\theta$ is taken to be a neural network with fixed architecture, parameterized by $\theta \in \bbR^D$ for some large $D \in \bbN$.
The idea is to minimize $\risk(h_\theta)$ over $\theta$ through variants of gradient descent in the parameter space.

We propose to see \eqref{eq:motivating example risk} as acting directly on functions, as opposed to a loss over finite-dimensional parameters.
Our main motivation for doing so is preserving the convexity of the risk when $L$ is linear.\footnote{If $L$ is nonlinear we may lose convexity, but still obtain a problem that is simpler than what a non-linear parametrization would result in.} %
This is also aligned with a recent surge in the \emph{first optimize, then discretize} paradigm in the scientific ML literature, cf. \cite{muller23, position-sciml}, which tries to emulate functional optimization algorithms when optimizing a parametric ansatz $h_\theta$ to solve \eqref{eq:motivating pde}.
The functional perspective has also influenced recent works in statistical inverse problems \cite{fonseca2022statistical} and nonparametric instrumental variable regression \cite{fonseca2024nonparametric}.

To proceed, we must first specify the domain $\Hcal$ of the risk $\risk$.
Due to the
nature of problem \eqref{eq:motivating pde}, we take $\Hcal = \bbH^\ell(\Omega)$,
the Sobolev space of order $\ell$ and exponent $2$ in $\Omega$ --- that is, the space of all $h \in L^2(\Omega)$ whose weak derivatives exist up to order $\ell$ and lie in $L^2(\Omega)$, which we endow with the usual inner product $\dotprod{h, h'}_{\bbH^\ell(\Omega)}~=~\sum_{j} \dotprod{D^{(j)}h, D^{(j)}h'}_{L^2(\Omega)}$.\footnote{We denote the $j$-th derivative of $h$ by $D^{(j)}h$, which we interpret as a $d^j$-dimensional array of real numbers.}
Thus, solving the PDE \eqref{eq:motivating pde} becomes the optimization problem
\begin{equation}
    \label{eq:motivating problem}
    \argmin_{h \in \bbH^\ell(\Omega)} \risk(h),
\end{equation}
where $\risk$ is given by \eqref{eq:motivating example risk}.

To solve this problem under the functional paradigm, the most direct approach would be to
employ gradient descent in $\bbH^\ell(\Omega)$.
By definition, if $\risk$ is Fréchet differentiable, the gradient $\nabla \risk(h)$ is the unique element of $\bbH^\ell(\Omega)$ satisfying $\dotprod{\nabla \risk(h), v}_{\bbH^\ell(\Omega)} = D \risk (h ; v)$ for all $v \in \bbH^\ell(\Omega)$, where
\begin{equation}
    \label{eq:direc deriv def}
    D \risk (h ; v) = \lim_{\delta \to 0} \frac{\risk(h + \delta v) - \risk(h)}{\delta}
\end{equation}
is the directional derivative of $\risk$ at $h$ in the direction $v$.
Thus, to obtain $\nabla \risk (h)$ we first need to compute $D \risk (h; v)$, which is straightforward in this case due to the linearity of $L$:
\begin{equation}
    \label{eq:direc deriv linear case}
    D \risk (h ; v)
    = \int_\Omega L[h](\bx) L[v](\bx) \drm \bx
    + \int_{\partial \Omega} (h(\bx) - f(\bx)) v(\bx) \gap \sigma(\ddrm \bx),
\end{equation}
where $\sigma$ is the surface measure on $\partial \Omega$.

We can now see that the functional $D\risk(h; \cdot)$ is linear and continuous in the $\bbH^\ell(\Omega)$-norm.
Furthermore, the map $h \mapsto D \risk (h; \cdot)$, from $\bbH^\ell(\Omega)$ to its dual, is also continuous, implying that $\risk$ is Fréchet differentiable \cite{pathak2018}.
Therefore, the gradient $\nabla \risk (h)$ is well-defined, and finding it is equivalent to solving the following problem:
\begin{equation*}
    \begin{aligned}
        &\hspace{2cm}\text{Find $g \in \bbH^\ell(\Omega)$ such that}\\
        &
        \sum_{j = 0}^\ell \int_{\Omega} D^{(j)} g(\bx) \cdot D^{(j)} v(\bx) \drm \bx
        = \int_\Omega L[h](\bx) L[v](\bx) \drm \bx
        + \!\!\int_{\partial \Omega} (h(\bx) - f(\bx)) v(\bx) \gap \sigma(\ddrm \bx), \\
        &\hspace{7cm}\text{for all $v \in \bbH^\ell(\Omega)$}.
    \end{aligned}
\end{equation*}
This is a variational problem for which no general closed form solution exists, creating the need to look beyond exact gradients in order to solve \eqref{eq:motivating problem}.

\subsection{Background: random gradient-free methods}

To avoid the computation of exact gradients, an existing branch of the literature constructs ``random gradient-free methods,'' which leverage stochastic gradient estimators based on computations of directional derivatives.
Hence, though these methods are not derivative-free in a strict sense, they are gradient-free.
So far, this approach has only been considered for \emph{finite dimensional} optimization problems. These methods exhibit prohibitive limitations when generalized to infinite-dimensional spaces.

If $\Hcal = \bbR^d$ for some $d \in \bbN$, then random gradient-free methods estimate the gradient $g \defeq \nabla \risk(h)$ as
\begin{equation}
    \label{eq:ghat definition}
    \ghat = \frac{1}{M} \sum_{m=1}^{M} D \risk (h ; v_m) v_m,
\end{equation}
where $v_1,\ldots,v_M$ are i.i.d.\ vectors in $\bbR^d$ with zero mean and identity covariance matrix, cf. \cite{nesterov2017}.
This is thus an unbiased estimator of $g$ that, under additional assumptions on $v$, has finite second moment: $\bbE[\norm{\ghat}^2] < +\infty$. For example, in the convenient case where $v_i \sim \Ncal(0,\bI_d)$, one may show that $\mean[\norm{\ghat}^2] = \norm*{g}^2 \left( \frac{M + d + 1}{M}\right)$.
This setup already exhibits a couple of issues that may arise when $\Hcal$ is high/infinite dimensional:

\begin{enumerate}[label=(\roman*)]

\item The expression for $\mean[\norm*{\ghat}^2]$ presented above suggests taking the sample size $M$ to be proportional to $d$. As $d \to +\infty$, this becomes infeasible. \label{en:infinite sample size}

\item Many of the usual choices for the distribution of the $v_i$'s have no infinite-dimensional analogue: there is no isotropic Gaussian distribution \cite{daprato2006}, no uniform measure over the sphere \cite{yamasaki1985}, and no Rademacher distribution (since sequences of $\pm 1$'s are not square-summable).
These are distributions used, for example, in \cite{kozak2021, nesterov2017, stich2013} for the finite dimensional scenario.
\label{en:common distributions fail}

\item More broadly, if a random element $v$ in an infinite dimensional Hilbert space satisfies $\bbE[\norm{v}^2] < +\infty$, then it necessarily has a compact covariance operator, which thus cannot be the identity --- a necessary condition for the unbiasedness of $\ghat$ in \eqref{eq:ghat definition}.
In fact, since eigenvalues of compact operators concentrate around zero \cite[Theorem~4.25]{rudin}, we have $\norm{T - I} \geq 1$ for any compact operator $T: \Hcal \to \Hcal$.\looseness=-1 \label{en:compact covariance operator}

\item Generally speaking, it is difficult to design unbiased gradient estimators $\hat{g}$ whose second moment is guaranteed to be finite for any gradient $g$. \label{en:finite variance all g}

\end{enumerate}

In this paper, we resolve these issues by proposing a novel random gradient-free algorithm that is suitable for infinite-dimensional optimization.
By first sampling a finite dimension $K$ and then sampling random directions $v_i$ from a specific $K$-dimensional subspace of $\Hcal$, we are able to overcome issues \ref{en:common distributions fail} and \ref{en:compact covariance operator};
then, by choosing the sample size $M$ proportional to the random dimension, we bypass issue \ref{en:infinite sample size}.
Finally, by introducing an appropriate preconditioning of the gradient descent process we are able to control the variance of our gradient estimates, thus resolving issue \ref{en:finite variance all g}.

\paragraph{Notation} We denote the set of positive integers by $\bbN$. Given a real, separable Hilbert space $(\Hcal, \dotprod{\cdot, \cdot})$ and a finite dimension $d$, the elements of $\Hcal$ are denoted by regular font Latin letters $f, g, h, \ldots$, whereas the elements of $\bbR^d$ are denoted by boldface Latin letters $\bx, \by, \bz, \ldots$
Regular font Greek letters will always denote scalars.
For $h, h' \in \Hcal$, we write $h \otimes h'$ to denote the rank-one map $(h \otimes h')v = \dotprod{h', v} h$.
If $\basis = \{e_1, e_2, \ldots \}$ is an orthonormal basis for $\Hcal$, $\{\gamma_i\}_{i \in \bbN}$ is a positive, bounded sequence and\footnote{Here and in similar situations where there is a series of operators, we define the series as the limit of the partial sums in the \emph{strong operator topology}, not the uniform operator topology.}~$C = \sum_{i=1}^\infty \gamma_i (e_i \otimes e_i) $ is a diagonal operator, the powers $C^s$, for $s \in \bbR$, are defined through the usual functional calculus: $C^s = \sum_{i=1}^\infty (\gamma_i)^s (e_i \otimes e_i)$.
These are possibly unbounded operators with restricted domains.

\section{\algoname}

We assume to have access to a \emph{pre-basis} $\prebasis = \{ b_1, b_2, \ldots \} \subset \Hcal$, i.e., a linearly independent subset of $\Hcal$ whose span is dense in $\Hcal$.
With $\prebasis$ in hand, we define the finite-dimensional subspaces $\prebasis_k = \spann\{b_1, \ldots, b_k\}$, for $k \in \bbN$.
As a theoretical tool, it will also be useful to refer to the orthonormalization $\basis = \{e_1, e_2, \ldots\}$ of the pre-basis, defined through Gram-Schmidt.
Since $\prebasis$ has dense span, $\basis$ is an actual orthonormal basis of $\Hcal$. We emphasize that we do not assume an orthonormal basis to be available in closed form, rendering our approach applicable to spaces beyond the simple cases where such a basis is known.\footnote{For example, in our motivating application of optimization over Sobolev spaces, there are no explicitly-known orthonormal bases for general domains $\Omega$ and differentiability orders $\ell$.}\looseness=-1

Fix a probability space $(\Scal, \Fcal, \prob)$.
At the $n$-th optimization step, our procedure starts by 
sampling a random finite dimension $K_n$ from some distribution $\Kdist$ in $\bbN$ with infinite support.
Given $K_n=k$, it then performs the updates
\begin{equation}\label{eq:our-update}
    h_{n+1} = h_n - \alpha_n \gap \ghat_n(k)
    \qquad \text{with} \qquad
    \ghat_n(k) = \frac{\lambda_k}{M_k} \sum_{m=1}^{M_k} D \risk (h_n ; v_m) v_m,
\end{equation}
where $M_k \in \bbN$ denotes the sample-size for the stochastic gradient approximation, $v_1,\ldots,v_{M_k}$ are i.i.d.\ random directions in $\Hcal$ sampled from a distribution $\vdistgivenk$ supported in $\prebasis_k$, and $\lambda_k \in \bbR_+$ corresponds to a preconditioner on the gradient.
We require $\{ \lambda_i \}_{i\in\bbN}$ to be positive and bounded.

We seek to design a distribution for the $v_i$'s such that $\ghat_n(K_n)$ is an unbiased preconditioned gradient, i.e., letting $g_n \defeq \nabla \risk(h_n)$, we want
the expectation of $\ghat_n(K_n)$ over the joint distribution of $K_n$ and the $v_i$'s, conditional on $h_1, \ldots, h_n$, to be $C g_n$ for a positive-definite operator $C : \Hcal \to \Hcal$, while ensuring finite second moment in an appropriate sense.
This is straightforward in finite dimension, but challenging in infinite dimensional spaces.
To this end, we construct the conditional distributions $\vdistgivenk$ so that their marginal $\vdist$ over $K_n$ has identity covariance; this guarantees preconditioned unbiasedness, and with additional conditions (to be derived in \Cref{thm:ghat variance} and corollaries) will also ensure finite second moment.

To attain this, it will be useful to refer to the QR decomposition of quasi-matrices.
Let $B_k$ be the quasi-matrix $[b_1, \ldots, b_k]$, i.e., the operator $B_k: \bbR^k \to \Hcal$ given by
\begin{equation*}
    B_k = \sum_{i=1}^k b_i \otimes \be_i,
\end{equation*}
where $\{\be_1, \ldots, \be_k \}$ is the canonical basis of $\bbR^k$.
The QR decomposition can be naturally extended to quasi-matrices, resulting in $B_k = Q_k \bR_k$, where $Q_k = [e_1, \ldots, e_k]$ is an orthogonal quasi-matrix
and $\bR_k \in \bbR^{k \times k}$ is an upper-triangular matrix \cite{trefethen2010}.

Let $K \sim \Kdist$ and define the tail probabilities $t_i = \prob [K \geq i]$, as well as the matrix $\bT_k = \diag (t_1, \ldots, t_k)$.
Since $\Kdist$ has infinite support, we have $t_i > 0$ for all $i \in \bbN$.
We then take $\vdistgivenk$ to be the distribution of
\begin{equation}
    \label{eq:v definition}
    v = B^{\vphantom{-\frac{1}{2}}}_k \bR^{-1}_k \bT_k^{-\frac{1}{2}} \bz,
    \quad
    \text{where}
    \quad
    \bz \sim \Ncal(\bzero, \bI_k).
\end{equation}

With this choice, it follows:

\begin{proposition}
    \label{prop: unbiased precond gradient}
    The distributions $\vdistgivenk$ and $\vdist$ both have zero mean and satisfy
    \vspace{-2mm}
    \begin{equation}
        \label{eq:covariances}
        \Cov[\vdistgivenk] = Q_k^{\vphantom{-1}} \bT_k^{-1} Q_k^{\vphantom{-1}*},
        \qquad\quad
        \Cov[\vdist] = I.
    \end{equation}
    Consequently, $ \mean_n[\gap \ghat_n(K_n)] = C \gap \nabla \risk (h_n)$,  where $\mean_n$ denotes the conditional expectation given $h_1, \ldots, h_n$, and $C : \Hcal \to \Hcal$ is the diagonal operator
    \begin{equation}
        \label{eq:C definition}
        C = \sum_{i=1}^\infty \gamma_i \gap (e_i \otimes e_i),
    \end{equation}
    with $\gamma_i \defeq \mean [\lambda_K \mid K \geq i]$. 
\end{proposition}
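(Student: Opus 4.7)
My plan is to derive both claims from two observations: (i) the QR factorization $B_k = Q_k \bR_k$ will cause the $\bR_k$ factor in the definition of $v$ to cancel, collapsing $\Cov[\vdistgivenk]$ into a simple diagonal form in the orthonormal basis $\basis$; and (ii) Fréchet differentiability together with the Riesz representation will let me write $D\risk(h_n;v_m) = \dotprod{g_n,v_m}$ for $g_n \defeq \nabla\risk(h_n)$, so that $D\risk(h_n;v_m)\,v_m = (v_m\otimes v_m)\,g_n$ and the final identity reduces to an operator-valued expectation of $v\otimes v$.

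I would first establish the covariance identities. Given $K=k$, the direction $v = B_k\bR_k^{-1}\bT_k^{-1/2}\bz$ is the image of a centered Gaussian under a bounded linear map, hence itself centered with conditional covariance $(B_k\bR_k^{-1}\bT_k^{-1/2})(B_k\bR_k^{-1}\bT_k^{-1/2})^*$. Substituting $B_k = Q_k\bR_k$ makes the $\bR_k$ factors cancel, and expanding $Q_k = [e_1,\ldots,e_k]$ yields
\begin{equation*}
    \Cov[\vdistgivenk] = Q_k\bT_k^{-1}Q_k^* = \sum_{i=1}^{k} t_i^{-1}(e_i\otimes e_i).
\end{equation*}
For the marginal, the law of total covariance (the cross term vanishes by the zero conditional mean) gives $\Cov[\vdist] = \sum_{k\geq 1}\prob[K=k]\sum_{i=1}^{k} t_i^{-1}(e_i\otimes e_i)$; swapping the two sums, the coefficient of each $e_i\otimes e_i$ collapses via $\sum_{k\geq i}\prob[K=k] = t_i$, yielding $\Cov[\vdist] = \sum_{i\geq 1} e_i\otimes e_i = I$.

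The final claim should then fall out mechanically. Conditional on $h_1,\ldots,h_n,K_n=k$, the directions $v_1,\ldots,v_{M_k}$ are iid with law $\vdistgivenk$ and zero mean, so $\mean[v_m\otimes v_m \mid h_1,\ldots,h_n, K_n=k] = \Cov[\vdistgivenk]$; the $M_k^{-1}$ averaging disappears, leaving
\begin{equation*}
    \mean\!\left[\ghat_n(k)\mid h_1,\ldots,h_n, K_n=k\right] = \lambda_k\,\Cov[\vdistgivenk]\,g_n = \lambda_k \sum_{i=1}^{k} t_i^{-1}(e_i\otimes e_i)\,g_n.
\end{equation*}
Taking expectation over $K_n\sim\Kdist$ and swapping sums exactly as before, the coefficient of $(e_i\otimes e_i)\,g_n$ becomes $t_i^{-1}\mean[\lambda_K\ind\{K\geq i\}] = \mean[\lambda_K\mid K\geq i] = \gamma_i$, giving $\mean_n[\ghat_n(K_n)] = Cg_n$.

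The only point of care --- and I expect it to be essentially painless --- is that the operator series above converge only in the strong operator topology; since every summand is a positive operator and $\{\lambda_k\}$ is bounded, Tonelli and monotone convergence applied pointwise on $\Hcal$ justify all the interchanges without quantitative effort. The pleasant feature of the construction is that the $\bR_k$ factor disappears in the very first step, so the argument uses only the existence of the Gram--Schmidt orthonormalization $\basis$ of $\prebasis$, never any quantitative control on it.
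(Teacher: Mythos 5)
Your proposal is correct and follows essentially the same route as the paper: the QR cancellation $v = Q_k\bT_k^{-1/2}\bz$ gives the conditional covariance $Q_k\bT_k^{-1}Q_k^*$, and the tower property plus the interchange $\sum_{k\ge i}\prob[K=k]=t_i$ yields both $\Cov[\vdist]=I$ and the coefficients $\gamma_i=\mean[\lambda_K\mid K\ge i]$, exactly as in the paper. The only difference is bookkeeping: since $\mean[\norm{v}^2]=+\infty$, the paper justifies the tower property by first proving $L^1$ lemmas for $\dotprod{v,h}$ and $\dotprod{v,h}\dotprod{v,h'}$ and then applying Fubini to scalar series, whereas you appeal to positivity and monotone (strong-operator) convergence --- a legitimate substitute, provided you note that Tonelli is applied to the nonnegative quadratic forms $\dotprod{\cdot\, h,h}$ and the bilinear case is recovered by polarization, since the operator series do not converge absolutely in norm.
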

\begin{proof}
    See \Cref{sec:proof of unbiased precond gradient}.
\end{proof}

\begin{remark}
    Distributions of $v$ for which $\mean[\norm*{v}^2] < +\infty$ are said to be of \emph{strong second order}, and must have a compact covariance operator, cf. issue \ref{en:compact covariance operator} in \Cref{sec:introduction}.
    Since $\Hcal$ is infinite dimensional, the identity operator  is not compact, implying that our proposed distribution $\vdist$ is not of strong second order.
    It is, however, of \emph{weak second order}, meaning that it satisfies $\mean[\dotprod{v, h}^2] < +\infty$ for any $h \in \Hcal$.
    Such distributions still have covariance operators, which do not need to be compact, cf. \cite{baker1981, vakhania1978}.
\end{remark}

Leveraging this, we obtain \Cref{algo:our algorithm}, which solves \eqref{eq:main problem} by employing update \eqref{eq:our-update} with the distribution $\vdistgivenk$ specified in \eqref{eq:v definition}.
The analysis of our algorithm is inspired by that of preconditioned gradient and variable metric methods, see~\hbox{\cite[Chapter~4]{duchi2018}} and the references therein.
In this direction, a central object for the upcoming results is the (diagonal) inverse square-root operator
$\Cminushalf : C^{\frac{1}{2}} (\Hcal) \to \Hcal$.
This operator is well-defined because both $C$ and $\Chalf$ are positive definite operators: we require $\lambda_i > 0$ for all $i$, implying that the diagonal of $C$ is strictly positive.
With $\Cminushalf$ we can define the inner product $\dotprod{h, h'}_C \defeq \dotprod{\Cminushalf h, \Cminushalf h'}$, for $h, h' \in \Chalf (\Hcal)$, as well as the norm $\norm{h}_C^2 = \dotprod{h, h}_C$, which controls the behavior of our convergence bounds.
It is straightforward to check that $\Chalf (\Hcal)$ contains the orthonormal basis $\basis$, making $\Cminushalf$ a densely-defined linear operator in $\Hcal$.

\begin{remark}
    \label{eq:cameron martin}
    We note that $C$ is the covariance operator of $\sqrt{\lambda_K} v$, for $v$ given by \eqref{eq:v definition}.
    Though the distribution of $\sqrt{\lambda_K} v$ is not Gaussian, in the context of Gaussian measures the subspace  $\Chalf(\Hcal)$ is called the Cameron-Martin space of the Gaussian with zero mean and covariance $C$, cf. \cite{vakhania1978}.
\end{remark}
\begin{algorithm}[t]
    \caption{\algoname \ (\algoacronym)}
    \label{algo:our algorithm}
    \hspace*{-0.4cm}
    \begin{minipage}{.98\textwidth}
    \setstretch{1.2}
    \begin{algorithmic}
        \vspace{.05em}
        \STATE \textbf{Parameters:}
        Distribution $\Kdist$ over $\bbN$, sequence of sample sizes $\{ M_k \}_{k\in\bbN} \subset \bbN$, sequence of preconditioning parameters $\{\lambda_k\}_{k\in\bbN} \subset \bbR_+$, sequence of learning rates $\{\alpha_{n}\}_{n\in\bbN} \subset \bbR_+$, number of iterations $N \in \bbN$, starting point $h_1 \in \Chalf(\Hcal)$.
        \STATE \textbf{Input:} Directional derivative $D \risk : \Hcal \times \Hcal \to \bbR$, pre-basis $\prebasis = \{ b_1, b_2, \ldots \}$, inner product $\dotprod{\cdot, \cdot}$ (for computing QR decompositions).
        \STATE \textbf{Output:} $h_1, \ldots, h_{N+1} \in \Hcal$.
        \vspace{.5em}
        \FOR{$1 \leq n \leq N$}
            \STATE $k \gets $ sample from $\Kdist$
            \vspace{.5em}
            \STATE $B_k \gets [b_1, \ldots, b_k]$
            \STATE $\bT_k \gets \diag (t_1, \ldots, t_k)$
            \STATE $\bR_k \gets$ R matrix from QR decomposition of $B_k$ \COMMENT{Can be precomputed}
            \vspace{.5em}
            \STATE $\bz_1, \ldots, \bz_{M_k} \gets$ i.i.d.\ sample from $\Ncal(\bzero, \bI_k)$
            \STATE $v_{m} \gets B^{\vphantom{-\frac{1}{2}}}_k \bR^{-1}_k \bT_k^{-\frac{1}{2}} \bz_m, \ 1 \leq m \leq M_k$
            \STATE $\ghat_n \gets \frac{\lambda_k}{M_k} \sum_{m=1}^{M_k} D \risk (h_{n}; v_{m}) v_{m}$
            \vspace{.5em}
            \STATE $h_{n+1} \gets h_{n} - \alpha_n \ghat_n$
        \ENDFOR
        \vspace{.15em}
    \end{algorithmic}
    \end{minipage}
\end{algorithm}

We can now state our first convergence bound.
To simplify the notation we will write $\ghat_n$ instead of $\ghat_n (K_n)$, leaving the dependence of $\ghat_n$ on $K_n$ implicit.

\begin{theorem}
    \label{thm:starting bound}
    Assume that $\risk$ is convex, and
    let $h_1, \ldots, h_N$ be the output sequence of \Cref{algo:our algorithm}.
    Define the averaged estimate $ \hhat = (\sum_{n=1}^N \alpha_n h_n) / \sum_{n=1}^N \alpha_n$.
    Then, if $\mean[\norm*{\ghat_n}_C^2] < +\infty$ for all $n$, taking any $h \in C^{\frac{1}{2}}(\Hcal)$, we have:
    \begin{equation}
        \label{eq:intermediate risk bound}
        \mean[\risk(\hhat)] - \risk(h)
        \leq
        \frac{
            \frac{1}{2}\norm*{h - h_1}_{C}^2
        }{\sum_{n=1}^N \alpha_n}
        + \frac{
            \frac{1}{2}\sum_{n=1}^N \alpha_n^2 \mean[ \norm*{\ghat_n}_{C}^2 ]
        }{\sum_{n=1}^N \alpha_n}.
    \end{equation}
\end{theorem}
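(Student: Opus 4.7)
The plan is to adapt the classical convergence analysis for preconditioned stochastic subgradient descent to our Hilbert-space setting, being careful with the fact that $C^{-\frac{1}{2}}$ is only densely defined. The driving observation is that measuring the progress of the iterates in the $C$-norm exactly cancels the preconditioning induced by the unbiased estimator $\mean_n[\ghat_n] = C g_n$ from \Cref{prop: unbiased precond gradient}.

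First, I would verify that $h_n - h$ and $\ghat_n$ always lie in $C^{\frac{1}{2}}(\Hcal)$, so that the $C$-norm and $C$-inner product appearing in the statement are well-defined. Since each $v_m$ lies in $\prebasis_k = \spann\{e_1, \ldots, e_k\} \subset C^{\frac{1}{2}}(\Hcal)$, the estimator $\ghat_n$ belongs to $C^{\frac{1}{2}}(\Hcal)$; combined with $h_1 \in C^{\frac{1}{2}}(\Hcal)$ and the assumption $h \in C^{\frac{1}{2}}(\Hcal)$, a simple induction on the update rule gives $h_n - h \in C^{\frac{1}{2}}(\Hcal)$ for every $n$. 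Next, expanding the squared $C$-norm of the update yields
\begin{equation*}
    \norm*{h_{n+1} - h}_C^2
    = \norm*{h_n - h}_C^2
    - 2 \alpha_n \dotprod{\ghat_n, h_n - h}_C
    + \alpha_n^2 \norm*{\ghat_n}_C^2.
\end{equation*}
Taking the conditional expectation $\mean_n$ and applying \Cref{prop: unbiased precond gradient} replaces the cross term by $\dotprod{C g_n, h_n - h}_C$. The key algebraic identity is then
\begin{equation*}
    \dotprod{C g_n, h_n - h}_C
    = \dotprod{C^{\frac{1}{2}} g_n,\, C^{-\frac{1}{2}}(h_n - h)}
    = \dotprod{g_n,\, h_n - h},
\end{equation*}
which uses self-adjointness of $C^{\frac{1}{2}}$ in the functional calculus; this is precisely where the choice of the $C$-norm pays off, since it converts the preconditioned gradient back into an ordinary duality pairing with the Fréchet derivative.

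From here the argument is the standard one. Convexity of $\risk$ gives $\risk(h_n) - \risk(h) \leq \dotprod{g_n, h_n - h}$, so after rearranging,
\begin{equation*}
    2\alpha_n\bigl(\risk(h_n) - \risk(h)\bigr)
    \leq \norm*{h_n - h}_C^2 - \mean_n\!\bigl[\norm*{h_{n+1} - h}_C^2\bigr] + \alpha_n^2 \mean_n\!\bigl[\norm*{\ghat_n}_C^2\bigr].
\end{equation*}
Taking total expectations, summing over $n = 1, \ldots, N$, and telescoping the first two terms yields
\begin{equation*}
    2 \sum_{n=1}^N \alpha_n \bigl(\mean[\risk(h_n)] - \risk(h)\bigr)
    \leq \norm*{h_1 - h}_C^2 + \sum_{n=1}^N \alpha_n^2 \mean\!\bigl[\norm*{\ghat_n}_C^2\bigr].
\end{equation*}
Finally, dividing by $2 \sum_n \alpha_n$ and applying Jensen's inequality to the convex $\risk$ evaluated at the weighted average $\hhat$ produces the claimed bound.

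The main obstacle I anticipate is making the $C^{\pm \frac{1}{2}}$ manipulations rigorous: $C^{-\frac{1}{2}}$ is generally unbounded, so one must know that all objects paired inside $\dotprod{\cdot, \cdot}_C$ actually sit in $\dom(C^{-\frac{1}{2}}) = C^{\frac{1}{2}}(\Hcal)$, and that the duality rearrangement $\dotprod{C^{\frac{1}{2}} g_n, C^{-\frac{1}{2}} u} = \dotprod{g_n, u}$ is legitimate. Boundedness of $C^{\frac{1}{2}}$ (from the boundedness of $\{\lambda_k\}$, hence of $\{\gamma_i\}$) together with self-adjointness of $C^{\frac{1}{2}}$ on its domain handles this cleanly, but it is the one place where the infinite-dimensional nature of the problem requires genuine verification rather than a direct transcription of the finite-dimensional argument.
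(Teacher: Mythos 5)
Your overall architecture is the same as the paper's: expand $\norm*{h_{n+1}-h}_C^2$, use convexity on the cross term, telescope, and finish with Jensen. However, there is a genuine gap at the single step where the infinite-dimensional difficulty actually lives. You write that taking $\mean_n$ and ``applying \Cref{prop: unbiased precond gradient} replaces the cross term by $\dotprod{Cg_n, h_n-h}_C$.'' But \Cref{prop: unbiased precond gradient} only gives $\mean_n[\dotprod{\ghat_n, h'}] = \dotprod{Cg_n, h'}$ for the \emph{ordinary} inner product; to get $\mean_n[\dotprod{\ghat_n, h_n-h}_C] = \dotprod{Cg_n, h_n-h}_C$ you must commute the conditional expectation past $\Cminushalf$ (equivalently, past the infinite weighted sum $\sum_i \gamma_i^{-1}\dotprod{\ghat_n,e_i}\dotprod{h_n-h,e_i}$), and $\Cminushalf$ is unbounded. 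Your closing remark attributes the fix to ``boundedness of $C^{\frac12}$ together with self-adjointness of $C^{\frac12}$ on its domain,'' but that only justifies the pointwise algebra $\dotprod{Cg_n,w}_C=\dotprod{g_n,w}$; it says nothing about interchanging $\mean_n$ with the unbounded operator. This is exactly the trap the paper flags explicitly (``the reason is \emph{not} because $\mean[C^{-1}\ghat_n]=C^{-1}\mean[\ghat_n]$'').

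The missing ingredient is the hypothesis $\mean[\norm*{\ghat_n}_C^2]<+\infty$, which you never invoke except in the final bound. The paper's \Cref{lem:Cinv unbiasedness} uses it to show $\dotprod{h',\Cinv\ghat_n}\in L^1$ via Cauchy--Schwarz in the $\gamma$-weighted sequence space ($\abs{\dotprod{h',\Cinv\ghat_n}}\le\norm*{h'}_C\norm*{\ghat_n}_C$), which licenses conditioning on $K$ and applying Fubini term by term; only then does the identity $\mean_n[\dotprod{\ghat_n,e_i}\mid K]=\lambda_K\ind\{K\ge i\}t_i^{-1}\dotprod{g_n,e_i}$ combine with the definition $\gamma_i=\mean[\lambda_K\mid K\ge i]$ to yield $\mean_n[\dotprod{h',\Cinv\ghat_n}]=\dotprod{h',g_n}$. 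Without some version of this lemma your cross-term step is unjustified, and it is precisely the point where a naive transcription of the finite-dimensional argument fails. (A minor additional note: the paper works with $\ghat_n, h_n \in C(\Hcal)$, not merely $\Chalf(\Hcal)$, so that $\dotprod{h,h'}_C=\dotprod{h,\Cinv h'}$ is available; your weaker claim suffices for your formulation but you should be consistent about which domain you are using.)
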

\begin{proof}
    See \Cref{sec:proof starting bound}.
\end{proof}

As is evident from \Cref{thm:starting bound}, it is essential that the second moment term $\mean[ \norm*{\ghat_n}_{C}^2 ]$ be finite. By the tower property, a necessary condition would then be $\mean_n[ \norm*{\ghat_n}_{C}^2 ] \in L^1(\Scal, \Fcal, \prob)$, where, we recall, $\mean_n$ denotes the conditional expectation given $h_1, \ldots, h_n$. 
If $\Hcal$ were finite-dimensional, then usual choices for $\vdist$ have finite moments of all orders, trivially ensuring a finite second moment for $\ghat_n$;
however, in infinite-dimensional spaces this is substantially harder, and is only possible due to (i) the particular design of our $\vdist$, and (ii) the introduction of the preconditioning $\lambda_k$, as we will now show.

\begin{theorem}
    \label{thm:ghat variance}
    Recall that $\gamma_i = \mean[\lambda_K \mid K \geq i]$ and let $g_n \defeq \nabla \risk (h_n)$.
    The second moment of $\ghat_n$ then satisfies
    \begin{equation}
    \label{eq:ghat variance}
            \mean_n\!\!\left[
                \norm*{\ghat_n}^2_{C}
            \right]
            \kern-.2em=\kern-.1em \mean_n\!\!\left[
                \kern-.1em
                \frac{\lambda_K^2}{M_K}
                \kern-.2em
                \left(
                    \sum_{i=1}^K \frac{1}{t_i \gamma_i}
                    \kern-.1em
                \right)
                \kern-.4em
                \left(
                    \sum_{i=1}^K \frac{\dotprod{g_n, e_i}^2}{t_i}
                    \kern-.1em
                \right)
                \kern-.2em
            \right]
            \kern-.3em+\kern-.1em \mean_n\!\!\left[
                \lambda_K^2
                \kern-.2em
                \left(
                    \kern-.2em
                    1 \kern-.2em+\kern-.2em \frac{1}{M_K}
                    \kern-.2em
                \right)
                \kern-.4em
                \left(
                    \sum_{i=1}^K \frac{\dotprod{g_n, e_i}^2}{t_i^2 \gamma_i}
                \right)
                \kern-.2em
            \right].
    \end{equation}
\end{theorem}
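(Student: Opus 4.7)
The plan is to condition first on $K_n = k$ (in addition to $h_1,\ldots,h_n$), compute the conditional second moment in closed form, and then average over $K_n \sim \Kdist$. Two structural facts drive the computation. First, Fréchet differentiability of $\risk$ gives $D\risk(h_n; v) = \dotprod{g_n, v}$, so conditional on $K_n = k$ one has $\ghat_n = (\lambda_k/M_k)\sum_{m=1}^{M_k} \xi_m$ with i.i.d.\ vectors $\xi_m = \dotprod{g_n, v_m} v_m$. Second, the QR decomposition $B_k = Q_k \bR_k$ yields $B_k \bR_k^{-1} = Q_k$, so
\[
v = Q_k \bT_k^{-1/2}\bz = \sum_{i=1}^{k} t_i^{-1/2}\, z_i\, e_i, \qquad \bz \sim \Ncal(\bzero, \bI_k).
\]
From this I get the clean expansions $\dotprod{g_n, v} = \sum_{i=1}^k t_i^{-1/2} z_i \dotprod{g_n, e_i}$ and $\norm{v}_C^2 = \sum_{i=1}^k \gamma_i^{-1} t_i^{-1} z_i^2$ (using that each $e_i \in \Chalf(\Hcal)$).

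By bilinearity of $\dotprod{\cdot,\cdot}_C$ together with independence of the $\xi_m$,
\[
\mean_n\!\left[\norm{\ghat_n}_C^2 \,\big|\, K_n = k \right]
= \frac{\lambda_k^2}{M_k^2}\Big( M_k\, \mean[\norm{\xi_1}_C^2] + M_k(M_k-1)\, \norm{\mu_k}_C^2 \Big),
\]
where $\mu_k = \mean[\xi_1 \mid K_n = k]$. Since $v$ has mean zero, the identity $\mean[\dotprod{g_n, v}\, v] = \Cov[\vdistgivenk]\, g_n$ together with \Cref{prop: unbiased precond gradient} yields $\mu_k = \sum_{i=1}^k t_i^{-1} \dotprod{g_n, e_i} e_i$, hence $\norm{\mu_k}_C^2 = \sum_{i=1}^k \gamma_i^{-1} t_i^{-2} \dotprod{g_n, e_i}^2$.

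The heart of the argument is evaluating $\mean[\norm{\xi_1}_C^2] = \mean[\dotprod{g_n, v}^2 \norm{v}_C^2]$. Expanding in the $z_i$ produces a triple sum over $(i, i', j)$ with coefficients built from the $t_i$'s, $\gamma_j$'s, and $\dotprod{g_n, e_i}\dotprod{g_n, e_{i'}}$. Applying the Gaussian identities $\mean[z_i^4] = 3$, $\mean[z_i^2 z_j^2] = 1$ for $i \ne j$, and $\mean[z_i z_{i'} z_j^2] = 0$ whenever $i \ne i'$, only the diagonal in the first two indices survives, giving
\[
\mean[\norm{\xi_1}_C^2]
= 2 \sum_{i=1}^k \frac{\dotprod{g_n, e_i}^2}{t_i^2 \gamma_i}
+ \left(\sum_{i=1}^k \frac{1}{t_i \gamma_i}\right)\!\left(\sum_{i=1}^k \frac{\dotprod{g_n, e_i}^2}{t_i}\right).
\]

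Substituting back, the coefficient of $\sum_i \gamma_i^{-1} t_i^{-2} \dotprod{g_n, e_i}^2$ collapses to $2/M_k + (M_k - 1)/M_k = 1 + 1/M_k$, and the cross term inherits the prefactor $1/M_k$; taking the outer expectation over $K_n \sim \Kdist$ then produces exactly the two summands in \eqref{eq:ghat variance}. The only genuinely delicate step is the Gaussian fourth-moment bookkeeping in $\mean[\norm{\xi_1}_C^2]$: tracking which index collisions among $(i, i', j)$ contribute and confirming that odd-order terms vanish is where an arithmetic slip is easiest. Everything else is linear algebra made transparent by the orthonormal representation $v = \sum_i t_i^{-1/2} z_i e_i$ obtained from the QR identity $B_k \bR_k^{-1} = Q_k$.
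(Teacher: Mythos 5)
Your proof is correct and follows essentially the same route as the paper's: condition on $K$, use the representation $v = Q_K \bT_K^{-1/2}\bz$, split the second moment via the i.i.d.\ structure of the $v_m$'s into a single-sample fourth-moment term plus a squared-mean term, and evaluate the fourth moment with standard Gaussian identities. The only difference is organizational --- you compute the full conditional $C$-norm at once using scalar moments $\mean[z_i^4]=3$, $\mean[z_i^2 z_j^2]=1$, whereas the paper expands coordinate-by-coordinate over the basis $\basis$ and invokes the operator identity $\mean[(\bz\otimes\bz)\bL(\bz\otimes\bz)] = (\trace\bL)\bI_K + 2\bL$ --- and the resulting coefficient bookkeeping ($2/M_k + (M_k-1)/M_k = 1 + 1/M_k$) matches.
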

\begin{proof}
    See \Cref{sec:proof variance}.
\end{proof}

Let us start by considering what happens when there is no preconditioning:
\begin{corollary}
    \label{cor:no precond high var}
    Let $\lambda_k \equiv 1$.
    Then, independent of the choice of $M_k$, a necessary condition for $\mean_n [\norm*{\ghat_n}^2_C] < +\infty$ is
    \begin{equation}
        \label{eq:finite ellipsoid gradient norm}
        \sum_{i=1}^\infty \frac{\dotprod{g_n, e_i}^2}{t_i} < +\infty.
    \end{equation}
\end{corollary}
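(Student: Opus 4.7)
The plan is to extract a tractable lower bound on $\mean_n[\norm*{\ghat_n}^2_C]$ directly from \Cref{thm:ghat variance} and exploit the identity $t_i = \prob[K \ge i]$. Since only a necessary condition is sought, there is no need to sharpen any constants or optimize over $M_k$: it suffices to discard whichever parts of \eqref{eq:ghat variance} make the analysis easier.

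First I would substitute $\lambda_k \equiv 1$ into \eqref{eq:ghat variance}, which collapses $\gamma_i = \mean[\lambda_K \mid K \ge i]$ to $1$ for every $i$. The first term of \eqref{eq:ghat variance} is then nonnegative and can simply be discarded; in the second term I would use $1 + 1/M_K \ge 1$ to obtain the lower bound
$$
\mean_n\!\!\left[\norm*{\ghat_n}^2_C\right] \;\ge\; \mean_n\!\!\left[\sum_{i=1}^{K}\frac{\dotprod{g_n, e_i}^2}{t_i^{2}}\right].
$$
Next I would rewrite $\sum_{i=1}^K (\cdot)$ as $\sum_{i=1}^\infty \mathbf{1}\{K \ge i\} (\cdot)$ and apply Tonelli, which is justified because all summands are nonnegative. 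Since $g_n$ is deterministic under $\mean_n$, the coefficients $\dotprod{g_n, e_i}^2 / t_i^2$ pull outside the expectation, leaving $\mean_n[\mathbf{1}\{K \ge i\}] = \prob[K \ge i] = t_i$ to cancel one factor of $t_i$. The right-hand side thus equals $\sum_{i=1}^\infty \dotprod{g_n, e_i}^2 / t_i$, giving
$$
\mean_n\!\!\left[\norm*{\ghat_n}^2_C\right] \;\ge\; \sum_{i=1}^{\infty}\frac{\dotprod{g_n, e_i}^2}{t_i}.
$$
Finiteness of the left-hand side therefore forces \eqref{eq:finite ellipsoid gradient norm}.

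I do not anticipate any real obstacle: the hard technical work has already been done in \Cref{thm:ghat variance}, and what remains is a Tonelli swap combined with the tail-probability cancellation. The only point worth flagging is that the $M_k$-independence claimed in the statement is automatic, because the retained lower bound contains no $M_K$ term whatsoever; thus no matter how aggressively one scales the sample sizes $M_k$, the obstruction \eqref{eq:finite ellipsoid gradient norm} cannot be avoided without preconditioning.
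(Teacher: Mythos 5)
Your proposal is correct and follows essentially the same route as the paper's own proof: drop the first term of \eqref{eq:ghat variance}, bound $\lambda_K^2(1+1/M_K)\geq 1$ with $\gamma_i=1$, and then use the indicator rewriting with $\mean_n[\ind\{K\geq i\}]=t_i$ to cancel one power of $t_i$ and arrive at the lower bound $\sum_{i=1}^\infty \dotprod{g_n,e_i}^2/t_i$. Nothing to add.
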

\begin{remark}
    Note that if $\lambda_k \equiv 1$, then $C = I$ and $\norm*{\cdot}_C = \norm*{\cdot}$ meaning that, without preconditioning, we cannot guarantee $\mean_n[\norm*{\ghat_n}^2] < +\infty$, for every $g_n \in \Hcal$.
\end{remark}
\begin{proof}
    From \eqref{eq:ghat variance}, it is immediate that
    \begin{equation*}
        \mean_n[\norm*{\ghat_n}^2_C]
        \geq 
        \mean_n\!\!\left[
            \lambda_K^2
            \left(
                1 + \frac{1}{M_K}
            \right)
            \left(
                \sum_{i=1}^K \frac{\dotprod{g_n, e_i}^2}{t_i^2 \gamma_i}
            \right)
        \right]
        \geq 
        \mean_n\!\!\left[
            \sum_{i=1}^K \frac{\dotprod{g_n, e_i}^2}{t_i^2}
        \right],
    \end{equation*}
    where we have substituted $\lambda_k \equiv 1$.
    Now compute
    \begin{equation*}
        \mean_n\!\!\left[
            \sum_{i=1}^K \frac{\dotprod{g_n, e_i}^2}{t_i^2}
        \right]
        = 
        \mean_n\!\!\left[
            \sum_{i=1}^\infty \ind\{ K \geq i\}\frac{\dotprod{g_n, e_i}^2}{t_i^2}
        \right]
        = 
        \sum_{i=1}^\infty \frac{\dotprod{g_n, e_i}^2}{t_i},
    \end{equation*}
    implying that $\sum_{i=1}^\infty \dotprod{g_n, e_i}^2 / t_i$ is a lower bound for $\mean_n[\norm*{\ghat_n}^2_C]$.
\end{proof}
Therefore, if $\lambda_k \equiv 1$, to obtain a useful bound in \Cref{thm:starting bound} we would need to assume \eqref{eq:finite ellipsoid gradient norm} for every gradient $g_n$ in \Cref{algo:our algorithm}, which is rather restrictive.
Fortunately, there is a better choice of $\lambda_k$ and $M_k$ which avoids this issue altogether.
First, a useful lemma:
\begin{lemma}
    \label{lem:t_i bound}
    For any distribution $\Kdist$ over $\bbN$ we have
    \begin{equation}
        \label{eq:t_i bound}
        \frac{t_i}{2} \leq \mean[t_K \mid K \geq i] \leq t_i,
        \quad
        \text{for all}
        \quad
        i \in \bbN.
    \end{equation}
\end{lemma}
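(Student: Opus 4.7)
The plan is to reduce everything to manipulations on the point masses $p_j \defeq \prob[K = j]$, using the identity $t_j = \sum_{k \geq j} p_k$.

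For the upper bound, I would observe that on the event $\{K \geq i\}$ we have $t_K \leq t_i$, since the map $j \mapsto t_j$ is nonincreasing. Taking conditional expectation over $\{K \geq i\}$ yields $\mean[t_K \mid K \geq i] \leq t_i$ immediately; no further work needed.

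For the lower bound, I would write
\begin{equation*}
    \mean[t_K \mid K \geq i]
    = \frac{1}{t_i}\sum_{j \geq i} p_j \gap t_j
    = \frac{1}{t_i}\sum_{j \geq i}\sum_{k \geq j} p_j p_k,
\end{equation*}
and then compare this to the full double sum $t_i^2 = \sum_{j,k \geq i} p_j p_k$. The key identity is that, by symmetry in $(j,k)$,
\begin{equation*}
    2\sum_{j \geq i}\sum_{k \geq j} p_j p_k
    = \sum_{j,k \geq i} p_j p_k + \sum_{j \geq i} p_j^2
    \geq t_i^2,
\end{equation*}
since the diagonal $\{j = k\}$ is counted twice on the left but only once on the right. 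Dividing by $2 t_i$ gives $\mean[t_K \mid K \geq i] \geq t_i / 2$.

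The only mildly nontrivial step is the double-counting identity for the lower bound; everything else is a one-line monotonicity argument. No compactness or convergence subtlety arises because all series are nonnegative, so Tonelli's theorem justifies the interchange of summations without further comment.
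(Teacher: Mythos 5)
Your proof is correct and is essentially the paper's argument in different clothing: the paper writes $\mean[t_K \mid K \geq i] = \frac{1}{t_i}\sum_{j\geq i} t_j(t_j - t_{j+1})$ and telescopes after the algebraic identity $t_j(t_j-t_{j+1}) = \frac{1}{2}(t_j^2 - t_{j+1}^2) + \frac{1}{2}(t_j - t_{j+1})^2$, whereas you symmetrize the double sum $\sum_{j\geq i}\sum_{k\geq j}p_j p_k$, but the nonnegative remainder dropped in both cases is identical, namely $\sum_{j\geq i}p_j^2 = \sum_{j\geq i}(t_j - t_{j+1})^2$. Both your monotonicity argument for the upper bound and your double-counting step for the lower bound are sound, so nothing is missing.
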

\begin{proof}
    The upper bound is obvious, since $t_i = \prob[K \geq i]$ is decreasing in $i$.
    To obtain the lower bound, expand the expectation and factor out a telescoping sum:
    \begin{align*}
        \mean[t_K \mid K \geq i]
        = \sum_{j=i}^\infty t_j \frac{\prob[K = j]}{t_i}
        &= \frac{1}{t_i}\sum_{j=i}^\infty t_j (t_j - t_{j+1}) \\
        &= \frac{1}{t_i}\sum_{j=i}^\infty \frac{1}{2} \big(t_j^2 - t_{j+1}^2 \big) + \frac{1}{2} (t_j - t_{j+1})^2 \\
        &\geq \frac{1}{t_i}\sum_{j=i}^\infty \frac{1}{2} \big(t_j^2 - t_{j+1}^2 \big)
        = \frac{t_i^2}{2t_i} = \frac{t_i}{2},
    \end{align*}
    which follows from $t_i \to 0$ as $i \to +\infty$.
\end{proof}

As an immediate consequence, we have the following second moment bound:
\begin{corollary}
    \label{cor:variance bound}
    Choose $\lambda_k = t_k$ and $M_k = \lceil k / c \rceil$, for some $c > 0$.
    Then %
    \begin{equation}
        \label{eq:ghat variance bound}
        \mean_n [\norm*{\ghat_n}^2_C]
        \leq 2 (1 + 2 c) \norm{g_n}^2.
    \end{equation}
\end{corollary}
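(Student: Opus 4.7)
The plan is to start from \Cref{thm:ghat variance} and bound the two summands in \eqref{eq:ghat variance}---call them $T_1$ and $T_2$---separately. Plugging in $\lambda_k = t_k$ makes $\gamma_i = \mean[t_K \mid K \geq i]$, so \Cref{lem:t_i bound} gives the two-sided bound $t_i/2 \leq \gamma_i \leq t_i$ directly. The lower bound on $\gamma_i$ is the workhorse, while the monotonicity $t_K \leq t_i$ for $i \leq K$ will let us trade the random upper limit of the inner sums for deterministic factors that cancel against the outer $t_K^2$.

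For $T_1$, I would first bound the random prefactor $\frac{t_K^2}{M_K}\sum_{i=1}^K 1/(t_i\gamma_i)$ pointwise by $2c$: using $\gamma_i \geq t_i/2$ and then $1/t_i^2 \leq 1/t_K^2$ for $i \leq K$, the inner sum is at most $2K/t_K^2$; the $t_K^2$ cancels, and $M_K \geq K/c$ finishes the bound. Pulling this constant out of $\mean_n$ reduces $T_1$ to $2c \cdot \mean_n\!\left[\sum_{i=1}^K \dotprod{g_n, e_i}^2/t_i\right]$. Rewriting the inner sum as $\sum_{i=1}^\infty \ind\{K \geq i\}\dotprod{g_n, e_i}^2/t_i$ and using $\mean_n \ind\{K \geq i\} = \prob[K \geq i] = t_i$ telescopes to $\norm{g_n}^2$, yielding $T_1 \leq 2c\norm{g_n}^2$.

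For $T_2$, the analogous moves give $1/(t_i^2\gamma_i) \leq 2/t_i^3$ (from $\gamma_i \geq t_i/2$) and then $t_K^2/t_i^3 \leq 1/t_i$ (from $t_K \leq t_i$), so the integrand is dominated by $2(1 + 1/M_K)\sum_{i=1}^K \dotprod{g_n, e_i}^2/t_i$. Since $K \geq 1$, $M_K \geq K/c \geq 1/c$, hence $1 + 1/M_K \leq 1 + c$; repeating the expectation computation from $T_1$ produces $T_2 \leq 2(1+c)\norm{g_n}^2$. Summing, $\mean_n[\norm{\ghat_n}_C^2] \leq 2c\norm{g_n}^2 + 2(1+c)\norm{g_n}^2 = 2(1+2c)\norm{g_n}^2$, which is \eqref{eq:ghat variance bound}.

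The main obstacle is purely bookkeeping: the inequalities $\gamma_i \geq t_i/2$ and $t_K \leq t_i$ must be applied in exactly the right places so that a ``small'' $1/t_i$ is upgraded to $1/t_K$ only when it meets a compensating $t_K^2$ from the outer prefactor, and so that $1/M_K$ is converted to $c$ at the right moment. A sharper route, swapping expectation and summation in $T_2$ via Fubini and using $\mean[t_K^2 \mid K \geq i] \leq t_i \gamma_i$, actually gives the improved constant $2(1+c)$ overall, but the stated bound $2(1+2c)$ follows from the cleaner pointwise estimates above and is all that is needed downstream.
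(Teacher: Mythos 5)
Your proof is correct and follows essentially the same route as the paper's: substitute $M_K \geq K/c$, use the lower bound $\gamma_i \geq t_i/2$ from \Cref{lem:t_i bound}, trade $1/t_i$ for $1/t_K$ against the outer $t_K^2$ via monotonicity, and evaluate the remaining expectation with the indicator trick to get $\norm{g_n}^2$. The closing remark about a sharper constant via Fubini is a harmless aside and does not affect the argument.
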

\begin{proof}
    From \eqref{eq:ghat variance}, we can see that $\mean_n[\norm*{\ghat_n}^2_C]$ is decreasing in $M_k$, so we can obtain  an upper bound by substituting $M_k = k / c$.
    Then, by \Cref{lem:t_i bound}, we have
    \begin{align*}
            \mean_n  \kern-.3em\left[
                \norm*{\ghat_n}^2_{C}
            \right]
            &\leq \mean_n \kern-.2em\left[
                \frac{c \gap t_K^2}{K}\kern-.2em
                \left(
                    \sum_{i=1}^K \frac{2}{ \gap t_i^2}
                \right)
                \!\!\left(
                    \sum_{i=1}^K \frac{\dotprod{g_n, e_i}^2}{t_i}
                \right)\kern-.1em
            \right]
            \!+ \mean_n \kern-.3em\left[
                t_K^2\kern-.2em
                \left(
                    1 + \frac{c}{K}
                \right)
                \!\!\left(
                    \sum_{i=1}^K \frac{2 \dotprod{g_n, e_i}^2}{t_i^3}
                \right)
            \right] \\
            &\leq 2 c \gap \mean_n\!\! \left[
                \sum_{i=1}^K \frac{\dotprod{g_n, e_i}^2}{t_i}
            \right]
            + 2 \left( 1 + c \right) \mean_n\!\! \left[
                \sum_{i=1}^K \frac{\dotprod{g_n, e_i}^2}{t_i}
            \right] \\
            &= 2 \left( 1 + 2 c \right) \norm{g_n}^2,
    \end{align*}
    where we have used the fact that $t_K \leq t_i$ if $i \leq K$.
\end{proof}

Thus, combining \Cref{thm:starting bound} and \Cref{cor:variance bound}, we obtain our main results -- one under general convexity, and one that holds under an additional assumption of smoothness (i.e., that the gradient $\nabla \risk$ is Lipschitz):

\begin{theorem}
    \label{thm:risk bound}
    Under the same setting as \Cref{thm:starting bound}, take $\lambda_k = t_k$ and $M_k = \lceil k/c \rceil$ for some $c > 0$. %
    Then, for any $h \in C^{\frac{1}{2}}(\Hcal)$,
    \begin{equation}
        \label{eq:risk bound}
        \mean [\risk(\hhat)] - \risk(h)
        \leq
        \frac{
            \frac{1}{2} \norm*{h - h_1}^2_C
        }{\sum_{n=1}^N \alpha_n}
        + (1 + 2c)
        \frac{
             \sum_{n=1}^N \alpha_n^2 \mean[\norm*{g_n}^2]
        }{\sum_{n=1}^N \alpha_n}.
    \end{equation}
\end{theorem}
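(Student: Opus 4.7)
The plan is to assemble \Cref{thm:starting bound} with \Cref{cor:variance bound}, using the tower property as the bridge. First, I would observe that under the choices $\lambda_k = t_k$ and $M_k = \lceil k / c \rceil$, \Cref{cor:variance bound} directly gives the conditional bound $\mean_n[\norm{\ghat_n}_C^2] \leq 2(1+2c)\norm{g_n}^2$ almost surely. Combined with the assumed deterministic bound $\norm{g_n}^2 \leq G^2$ and the tower property, this upgrades to the unconditional bound $\mean[\norm{\ghat_n}_C^2] \leq 2(1+2c)G^2$. In particular, this is finite, so the integrability hypothesis required to invoke \Cref{thm:starting bound} is satisfied.

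With this in hand, I would substitute the uniform bound into the right-hand side of \eqref{eq:intermediate risk bound}. The factor $\frac{1}{2}$ there combines with the $2$ coming from \Cref{cor:variance bound} to yield exactly the constant $(1+2c)G^2$ displayed in \eqref{eq:risk bound}, and the sum $\sum_{n=1}^N \alpha_n^2$ factors cleanly out of the expectation. For the extension from $h \in \Chalf(\Hcal)$ (as required by \Cref{thm:starting bound}) to arbitrary $h \in \Hcal$ as stated here, I would note that $h_1 \in \Chalf(\Hcal)$ by the algorithm's input specification, so whenever $h \notin \Chalf(\Hcal)$ we have $h - h_1 \notin \Chalf(\Hcal)$ and $\norm{h - h_1}_C^2 = +\infty$, rendering the bound vacuously true in that case.

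I do not expect any real obstacle: the hard infinite-dimensional content---designing $\vdistgivenk$ through \eqref{eq:v definition} so that unbiased preconditioned gradient estimates exist, and controlling their second moment via the preconditioning $\lambda_k = t_k$ together with the matching sample size $M_k = \lceil k/c \rceil$---has already been carried out in \Cref{prop: unbiased precond gradient}, \Cref{thm:ghat variance}, and \Cref{cor:variance bound}. This final theorem is a clean algebraic packaging of those ingredients with the uniform gradient bound $G$, producing a concrete convergence rate expressed entirely in terms of the learning-rate schedule $\{\alpha_n\}$. The only mildly subtle point, worth flagging in the write-up, is that the almost-sure conditional bound from \Cref{cor:variance bound} must be promoted to an unconditional $L^1$ bound via the tower property before it can be inserted into the already-unconditional expectation on the right-hand side of \eqref{eq:intermediate risk bound}.
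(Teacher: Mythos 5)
Your proposal is correct and follows exactly the paper's route: the paper's proof is literally ``immediate consequence of \eqref{eq:intermediate risk bound} and \eqref{eq:ghat variance bound},'' and you supply precisely the details that makes this work (the tower property to lift the conditional bound of \Cref{cor:variance bound} to an unconditional one, and the cancellation $\tfrac{1}{2}\cdot 2(1+2c)=(1+2c)$). Your remark that the statement for general $h\in\Hcal$ reduces to the $h\in\Chalf(\Hcal)$ case because the bound is vacuous when $\norm{h-h_1}_C^2=+\infty$ is a correct and welcome clarification that the paper leaves implicit.
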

\begin{proof}
    Direct consequence of \eqref{eq:intermediate risk bound} and \eqref{eq:ghat variance bound}.
\end{proof}

\begin{theorem}
\label{thm:risk bound smooth}
    Under the same setting as \Cref{thm:risk bound}, assume also that $\nabla \risk$ is Lipschitz, with Lipschitz constant $\Lip(\nabla \risk)$, and that there exists $\hstar \in \argmin_{h \in \Hcal} \risk(h)$ such that $\hstar \in \Chalf(\Hcal)$.
    If we choose learning rates satisfying $\alpha_n \leq 1/\bigl( (1 + 2c) \gap \Lip(\nabla \risk) \bigr)$ for all $1 \leq n \leq N$, then we obtain
    \begin{equation}
        \label{eq:risk bound smooth}
        \mean[\risk(\hhat)] - \risk(\hstar) \leq
        \norm*{\hstar - h_1}_C^2
        \left[ 
            \frac{1}{2 \sum_{n=1}^N \alpha_n} + \frac{\sum_{n=1}^N \alpha_n^2}{\sum_{n=1}^N \alpha_n} \gap (1 + 2c) \gap \Lip(\nabla \risk)^2
        \right].
    \end{equation}
\end{theorem}
\begin{proof}
    See \Cref{sec:proof of risk bound smooth}.
\end{proof}
\begin{remark}
    It is immediate to see that taking a constant learning rate proportional to $1 / \sqrt{N}$ in \eqref{eq:risk bound smooth} yields a $O(1/\sqrt{N})$ bound on the excess expected risk; this is akin to the usual bounds for stochastic gradient descent.
\end{remark}

\Cref{thm:risk bound smooth} assumes that $\hstar \in \Chalf(\Hcal)$.
By \Cref{lem:t_i bound}, this is equivalent to
\begin{equation}
    \label{eq:hstar condition}
    \sum_{i=1}^\infty \frac{\dotprod{\hstar, e_i}^2}{t_i} < +\infty.
\end{equation}
Although there is no way to guarantee this holds \emph{a priori} without additional knowledge about $\hstar$, it is possible to show that, whatever $\hstar$ may be, there exists some distribution $\Kdist$ for which \eqref{eq:hstar condition} holds.%
\begin{proposition}
    \label{prop:t_i for hstar}
    For any $\hstar \in \Hcal$, there exists a distribution $\Kdist$ for which $t_i = \Kdist([i, +\infty))$ satisfies \eqref{eq:hstar condition}. For this $\Kdist$, we get $\hstar \in \Chalf(\Hcal)$. %
\end{proposition}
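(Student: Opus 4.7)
The plan is to construct, given any target $\hstar \in \Hcal$, an explicit sequence of tail probabilities tailored to the Fourier coefficients of $\hstar$ in the orthonormal basis $\basis$. Writing $a_i \defeq \dotprod{\hstar, e_i}^2$, we have $\sum_{i=1}^\infty a_i = \norm{\hstar}^2 < +\infty$, so the tails $S_i \defeq \sum_{j \geq i} a_j$ form a non-increasing sequence with $S_1 = \norm{\hstar}^2$ and $S_i \to 0$. What I must produce is a strictly positive, non-increasing sequence $\{t_i\}_{i \in \bbN}$ with $t_1 = 1$ and $t_i \to 0$ --- which then arises as the tail probabilities of a distribution on $\bbN$ with infinite support via $\Kdist(\{i\}) = t_i - t_{i+1}$ --- and which in addition satisfies $\sum_{i=1}^\infty a_i / t_i < +\infty$.

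Ignoring momentarily the possibility that some $a_i$ vanish, the natural choice is $t_i = \sqrt{S_i}/\norm{\hstar}$: this is decreasing, tends to zero, and has $t_1 = 1$. The telescoping bound
\begin{equation*}
    \frac{a_i}{\sqrt{S_i}} = \frac{S_i - S_{i+1}}{\sqrt{S_i}}
    \leq 2 \bigl( \sqrt{S_i} - \sqrt{S_{i+1}} \bigr),
\end{equation*}
which follows from $\sqrt{S_i} - \sqrt{S_{i+1}} = (S_i - S_{i+1})/(\sqrt{S_i} + \sqrt{S_{i+1}}) \geq (S_i - S_{i+1})/(2\sqrt{S_i})$, then telescopes to yield $\sum_i a_i/\sqrt{S_i} \leq 2\sqrt{S_1} = 2\norm{\hstar}$, hence $\sum_i a_i/t_i \leq 2\norm{\hstar}^2 < +\infty$.

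The sole obstacle is the edge case where $\hstar$ has only finitely many nonzero Fourier coefficients: then $S_i = 0$ for $i$ large enough, and the naive $t_i$ becomes zero, failing to give a distribution with infinite support. I bypass this by adding a strictly positive, rapidly decaying regularizer and redefining
\begin{equation*}
    t_i \defeq \frac{\sqrt{S_i}}{2\norm{\hstar}} + 2^{-i},
\end{equation*}
which preserves $t_1 = 1$, strict positivity, strict monotonicity, and the limit $t_i \to 0$; the bound $\sum_i a_i/t_i \leq 2\norm{\hstar} \sum_i a_i/\sqrt{S_i} \leq 4\norm{\hstar}^2 < +\infty$ still holds (any index with $S_i = 0$ has $a_i = 0$ and contributes zero, while $t_i \geq 2^{-i} > 0$ keeps the ratio well-defined). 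The degenerate case $\hstar = 0$ is handled by choosing any distribution with infinite support. Finally, \Cref{lem:t_i bound} gives $\gamma_i \geq t_i/2$ under the choice $\lambda_k = t_k$ of \Cref{cor:variance bound}, whence $\sum_i a_i/\gamma_i \leq 2 \sum_i a_i/t_i < +\infty$, which is precisely the condition $\hstar \in \Chalf(\Hcal)$, yielding $\norm{\hstar - h_1}_C^2 < +\infty$ for the starting point $h_1 \in \Chalf(\Hcal)$ of \Cref{algo:our algorithm}.
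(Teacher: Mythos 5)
Your proof is correct, and it reaches the conclusion by a genuinely different construction from the paper's. The paper also works with the tail sums $A_i = \sum_{j \geq i} \dotprod{\hstar, e_j}^2$, but it extracts a subsequence $\{n_k\}$ with $A_{n_k} \leq 2^{-k}$ and sets $t_i = 1/k$ constant on the blocks $[n_{k-1}, n_k)$, so that $\sum_i \theta_i / t_i \leq \sum_k k\, 2^{-k+1}$; the resulting $\Kdist$ is supported only on the block endpoints (still an infinite set, as required). You instead take $t_i \asymp \sqrt{S_i}$ and invoke the classical telescoping estimate $\sum_i (S_i - S_{i+1})/\sqrt{S_i} \leq 2\sqrt{S_1}$, which buys an explicit quantitative bound $\sum_i a_i/t_i \leq 4\norm{\hstar}^2$ and a $\Kdist$ with full support on $\bbN$; the price is the need for the $2^{-i}$ regularizer to handle $\hstar$ with finitely many nonzero coefficients, a case the paper's block construction absorbs automatically. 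Your closing step relating \eqref{eq:hstar condition} to $\hstar \in \Chalf(\Hcal)$ via \Cref{lem:t_i bound} matches what the paper establishes in the discussion immediately preceding the proposition. All checks (validity of $\{t_i\}$ as a tail sequence, strict positivity, the $\hstar = 0$ degenerate case) go through.
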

\begin{proof}
    See \Cref{sec:proof of t_i for hstar}.
\end{proof}

\begin{remark}
    We assume throughout that the distribution of $\bz$ given $K=k$ is exactly $\Ncal(\bzero, \bI_k)$, but this is not strictly necessary.
    Our proofs only rely on the first four moments of $\bz$, therefore, all results remain valid if we swap $\Ncal(\bzero, \bI_k)$ by a distribution that matches all of its moments (including mixed ones) up to order four. %
\end{remark}

\section{Using \algoacronym\ to solve PDEs with functional optimization}
\label{sec:pdes}
In this section, we show how our framework can be applied to tackle our motivating example of solving PDEs, presented in Section~\ref{sec:motivation}.
To showcase \algoacronym's flexibility, we consider two linear PDEs --- the heat equation on a bar and the Poisson equation on an L-shaped domain --- and one non-linear PDE, an HJB equation.
They result in two convex and one non-convex optimization problem, respectively.
The HJB equation, for which the assumptions of our convergence results do not hold, offers empirical evidence that our algorithm still provides good approximations in more challenging scenarios.

\subsection{Formal setup and parameters}
Let $\Omega \subset \bbR^d$ be a bounded open set with a boundary smooth enough to admit a trace operator\footnote{We recall that the trace operator is the proper extension of the ``restriction to the boundary'' operator over continuous functions to Sobolev spaces.
It exists, for example, if the boundary of $\Omega$ is Lipschitz~\cite[Theorem~1.5.1.3]{grisvard2011}, in the sense that it is, locally and after possibly a change of coordinates, the graph of a Lipschitz function.}
$ \Trace : \bbH^1(\Omega) \to L^2(\partial \Omega)$.
For the sake of concreteness, we consider general PDEs with Dirichlet boundary conditions:\footnote{Other types of boundary conditions, such as Neumann or periodic boundary conditions, can be treated analogously.}
\begin{equation}
    \label{eq:pde dirichlet}
    \begin{cases}
        L [u] (\bx) = 0 &\quad \bx \in \Omega, \\
        u (\bx) = f (\bx) &\quad \bx \in \Lambda \subset \partial \Omega.
    \end{cases}
\end{equation}
Here, the boundary condition is interpreted in the trace sense and $L$ is a, possibly non-affine, $\ell$-th order differential operator of the form
\begin{equation*}
    L [u] (\bx) = F (\bx, u (\bx), Du (\bx), \ldots, D^{(\ell)} u (\bx)),
\end{equation*}
with $F : \bbR^d \times \bbR \times \bbR^d \times \cdots \times \bbR^{d^\ell} \to \bbR$ being a given function.
We assume that $L[u] \in L^2(\Omega)$ for all $h \in \bbH^\ell (\Omega)$ and define the risk functional
\begin{equation}
    \label{eq:pde risk}
    \risk(h) = \frac{1}{2} \norm*{L[h]}^2_{L^2(\Omega)} + \frac{1}{2} \norm*{(\Trace[h] - f)\ind_{\Lambda}}^2_{L^2(\partial \Omega)},
\end{equation}
as in \Cref{eq:motivating example risk}.
The assumption $L(\bbH^\ell(\Omega)) \subset L^2(\Omega)$ is readily checked if $L$ is an affine operator with bounded coefficients, in which case we furthermore have:
\begin{proposition}
    \label{prop:linear operator risk good}
    If $L : \bbH^\ell(\Omega) \to L^2(\Omega)$ is bounded and affine, then the risk functional \eqref{eq:pde risk} is convex, Fréchet-differentiable and has a Lipschitz gradient.
\end{proposition}
\begin{proof}
    See \Cref{sec:proof of linear operator risk good}.
\end{proof}

To be applied to PDEs, our framework requires a pre-basis $\Bcal = \{ b_i \}_{i\in\bbN}$ for the Sobolev space $\bbH^\ell (\Omega)$, where $\ell \in \bbN$.
To obtain one, we will take advantage of the theory of Reproducing Kernel Hilbert Spaces (RKHSs) and their smoothness properties.
Recall the Matérn kernel~\cite[Section~4.2]{rasmussen2008}, given by %
\begin{equation}
    \label{eq:matern kernel}
    \kernel_{\nu, \eta}(\bx, \by) = \frac{2^{1 - \nu}}{\Gamma(\nu)} \left(
        \frac{\sqrt{2} \nu \norm{\bx - \by}}{\eta}
    \right)^\nu K_\nu \! \left(
        \frac{\sqrt{2} \nu \norm{\bx - \by}}{\eta}
    \right),
    \qquad \bx, \by \in \Omega,
\end{equation}
where $\eta > 0$ is the bandwidth parameter, $\nu > 0$ is the smoothness parameter, $\Gamma$ is the usual 
Gamma function and $K_\nu$ is a modified Bessel function.
When $\nu - 1/2$ is a nonnegative integer, then $\kernel_{\nu, \eta}$ admits a simple closed form as a product of exponential and polynomial terms \cite{rasmussen2008}.

The following proposition establishes that Matérn kernels centered at appropriately chosen centers $\bx$ form a pre-basis for $\bbH^\ell(\Omega)$.
\begin{proposition}
    \label{thm:sobolev pre basis}
    Take $d \geq 2$.
    Let $\Omega \subset \bbR^d$ be a bounded, open set with Lipschitz boundary, let $\superdomain \supset \Omega$
    and let $\{\bx_i\}_{i\in\bbN}$ be a dense subset of $\superdomain$.
    For $\ell \in \bbN$, take $\nu > 0$ such that $\nu + d / 2$ is greater than or equal to $\ell$.
    Then, for any $\eta > 0$, the set $\Bcal = \{ \kernel_{\nu, \eta}(\bx_i, \cdot) \}_{i \in \bbN}$ is a pre-basis for $\bbH^{\ell}(\Omega)$.
\end{proposition}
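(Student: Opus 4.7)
The plan has three parts: (i) identify the RKHS of the Matérn kernel restricted to $\Omega$, (ii) establish linear independence of $\Bcal$, and (iii) establish density of $\spann \Bcal$ in $\bbH^\ell(\Omega)$.

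For (i), I would invoke the well-known identification: the RKHS of $\kernel_{\nu,\eta}$ on $\bbR^d$ is norm-equivalent to the Bessel-potential space $\bbH^{\nu + d/2}(\bbR^d)$, since the Fourier transform of $\kernel_{\nu,\eta}$ decays like $(1 + \norm{\xi}^2)^{-(\nu + d/2)}$. Because $\Omega$ has Lipschitz boundary, the Stein extension operator $E : \bbH^{\nu+d/2}(\Omega) \to \bbH^{\nu+d/2}(\bbR^d)$ is bounded; combining this with the standard restriction result for RKHS on a subset, one concludes that the RKHS $\Hcal_{\nu,\eta}(\Omega)$ of $\kernel_{\nu,\eta}\big|_{\Omega \times \Omega}$ is norm-equivalent to $\bbH^{\nu+d/2}(\Omega)$. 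In particular, since $\nu + d/2 \geq \ell$, the continuous embedding $\Hcal_{\nu,\eta}(\Omega) \hookrightarrow \bbH^\ell(\Omega)$ holds.

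For (ii), I would use that Matérn kernels are strictly positive definite on $\bbR^d$: for any distinct points $\bx_{i_1}, \ldots, \bx_{i_n}$, the Gram matrix $G = [\kernel_{\nu,\eta}(\bx_{i_j}, \bx_{i_l})]_{j,l}$ is positive definite. If a finite linear combination $\sum_j c_j \kernel_{\nu,\eta}(\bx_{i_j}, \cdot)$ vanishes as an element of $\bbH^\ell(\Omega)$, it vanishes pointwise (these kernels are continuous, and by Sobolev embedding for $\nu+d/2 \geq \ell$ they represent continuous functions). Evaluating at each $\bx_{i_l}$ gives $G \bc = 0$, hence $\bc = 0$.

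For (iii), I would argue $\bbH^\ell$-density in two stages. First, $\spann \Bcal$ is dense in $\Hcal_{\nu,\eta}(\Omega)$ under the RKHS norm: by the reproducing property, any $f \in \Hcal_{\nu,\eta}(\Omega)$ with $\dotprod{f, \kernel_{\nu,\eta}(\bx_i, \cdot)}_{\Hcal_{\nu,\eta}} = f(\bx_i) = 0$ for all $i$ must vanish on the dense set $\{\bx_i\}$, and therefore identically on $\Omega$ (since RKHS elements are continuous when $\nu + d/2 > 0$). Since the continuous embedding gives $\norm{\cdot}_{\bbH^\ell(\Omega)} \lesssim \norm{\cdot}_{\Hcal_{\nu,\eta}(\Omega)}$, this density transfers to the $\bbH^\ell$-topology. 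Second, $\Hcal_{\nu,\eta}(\Omega) \cong \bbH^{\nu+d/2}(\Omega)$ is itself $\bbH^\ell$-dense in $\bbH^\ell(\Omega)$: if $\nu + d/2 = \ell$ the two spaces coincide, and if $\nu + d/2 > \ell$ one uses that $C^\infty(\overline{\Omega}) \subset \bbH^{\nu+d/2}(\Omega)$ and $C^\infty(\overline{\Omega})$ is $\bbH^\ell$-dense in $\bbH^\ell(\Omega)$ (Meyers--Serrin, valid for Lipschitz $\Omega$). Chaining the two inclusions yields the claim.

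The main obstacle is step (i): carefully identifying the RKHS of the Matérn kernel on the bounded domain $\Omega$ as $\bbH^{\nu + d/2}(\Omega)$, especially when $\nu + d/2$ is non-integer and must be interpreted via Bessel potentials or Gagliardo seminorms. The rest is essentially bookkeeping using standard RKHS duality and density results for Sobolev spaces on Lipschitz domains.
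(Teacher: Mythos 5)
Your proposal is correct and follows the same overall skeleton as the paper's proof: identify the Matérn RKHS on $\Omega$ as norm-equivalent to $\bbH^{\nu+d/2}(\Omega)$ (via the Fourier characterization plus extension/restriction for Lipschitz domains, exactly the content of the paper's auxiliary theorem citing Wendland and McLean), get linear independence from strict positive definiteness, and then chain closures through the continuous embeddings $\Hcal_{\nu,\eta}(\Omega) = \bbH^{\nu+d/2}(\Omega) \hookrightarrow \bbH^\ell(\Omega)$ together with the $\bbH^\ell$-density of $C^\infty(\bar\Omega)$. The one step where you genuinely diverge is the RKHS-density of $\spann\Bcal$ in $\Hcal_{\nu,\eta}(\Omega)$: you use the annihilator argument (an element orthogonal to every $\kernel_{\nu,\eta}(\bx_i,\cdot)$ vanishes at each $\bx_i$, hence everywhere by continuity of RKHS elements and density of $\{\bx_i\}$), whereas the paper argues constructively, showing via the reproducing property and continuity of the kernel that $\norm{\kernel_{\nu,\eta}(\bx,\cdot) - \kernel_{\nu,\eta}(\by_i,\cdot)}_{\Hcal_{\nu,\eta}}^2 = \kernel(\bx,\bx) + \kernel(\by_i,\by_i) - 2\kernel(\bx,\by_i) \to 0$, so every kernel section is an RKHS-limit of sections at the $\bx_i$, and then invoking that the RKHS is the closed span of all its sections. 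Both are standard and equally rigorous; yours is shorter but needs the (true) fact that elements of $\Hcal_{\nu,\eta}(\Omega)$ are continuous, while the paper's is self-contained given only continuity of the kernel itself. Your linear-independence argument is also slightly more explicit than the paper's one-line appeal to positive definiteness, and correctly handles the passage from vanishing in $\bbH^\ell$ (a.e.) to vanishing pointwise via continuity. No gaps.
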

\begin{proof}
    The proof is given in \Cref{sec:proof of sobolev pre basis}.
\end{proof}
So, as long as we have a countable set of points $\{\bx_i\}_{i\in\bbN}$ that is dense in some set containing $\Omega$ (e.g., generated from quasi-random sampling at rational coordinates), the sequence of Matérn kernels centered at the $\bx_i$ is a suitable pre-basis.
This holds for any differentiability order $\ell$ and any bounded domain $\Omega$ with Lipschitz boundary, even when the Sobolev space $\bbH^\ell(\Omega)$ is not an RKHS and/or the computation of an orthonormal basis becomes intractable.\footnote{The computation of an orthonormal basis is highly dependent on both $\ell$ and $\Omega$, and generally one is not known in closed form.}

\paragraph{Implementation details} In order to compute Sobolev inner products (needed for the QR decomposition of the quasi-matrices), as well as directional derivatives of the risk defined in \eqref{eq:pde risk}, we need to compute multi-dimensional integrals.
In our experiments, we estimate these integrals with Quasi-Monte Carlo using the Roberts sequence \cite{robertsqmc}.
We chose this method for its scalability to higher dimensions, and superior performance compared to simple random sampling.
Computation of the directional derivatives is done using automatic differentiation, by writing $D \risk (h ; v) = \frac{\ddrm}{\ddrm \delta} \risk(h + \delta v) \big|_{\delta = 0}$.
For the distribution $\Kdist$ of our random dimension $K$, we choose a Poisson distribution truncated to powers of two.
More precisely, given a $\mu \in \bbN$, we take $\Kdist = \PoissonPofTwo(\mu)$, where $\PoissonPofTwo(\mu)$ is the distribution of $ 2^{\lceil \log_2 (K + 1) \rceil}$ when $K \sim \Poisson(\mu)$; we do this to avoid excessive code recompilations due to changing array shapes.
A Python/JAX implementation of our algorithm, along with code to reproduce our results, can be found in \texttt{\url{https://github.com/Caioflp/functional-gradient-free-descent}}.

\begin{figure}[t]
    \centering
    \begin{subfigure}{.9\textwidth}
        \includegraphics[width=\textwidth]{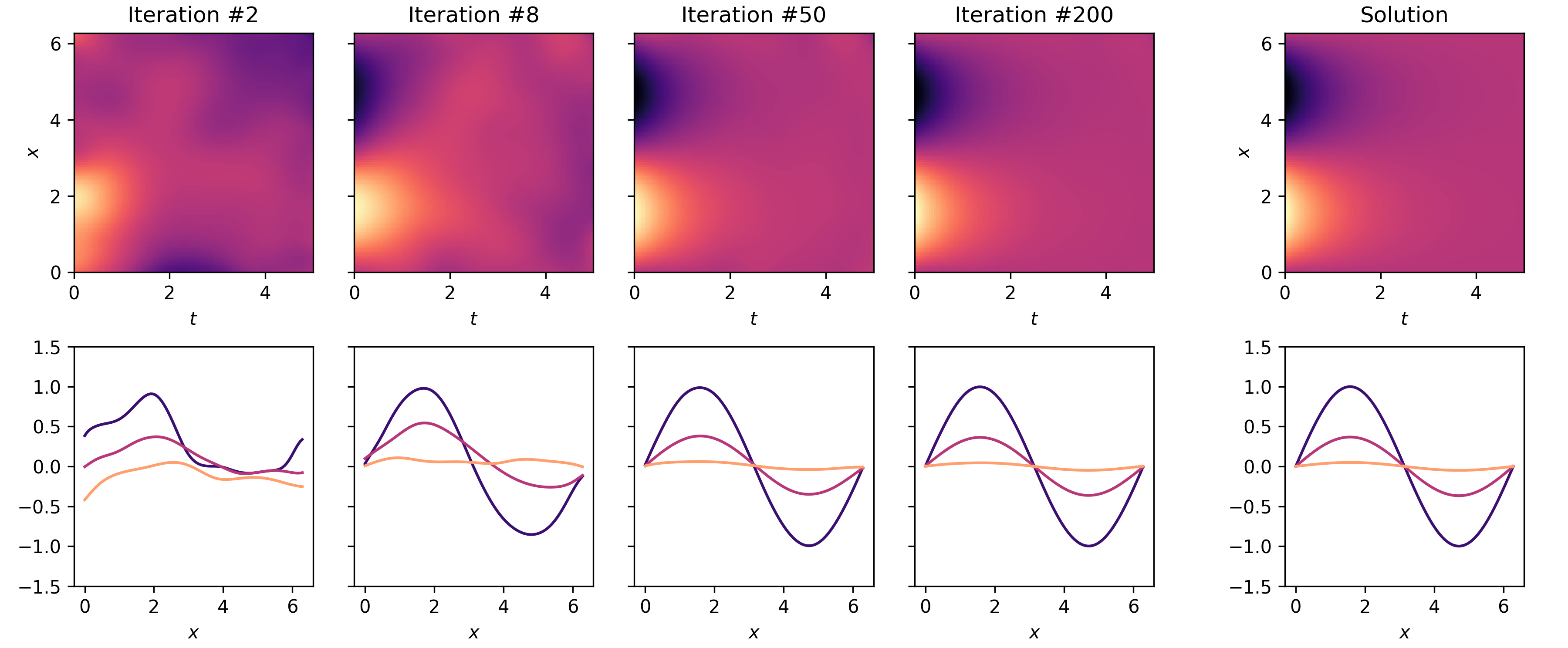}
        \caption{\algoacronym\ with preconditioning ($\lambda_k = t_k$)}
    \end{subfigure}
    \begin{subfigure}{.9\textwidth}
        \includegraphics[width=\textwidth]{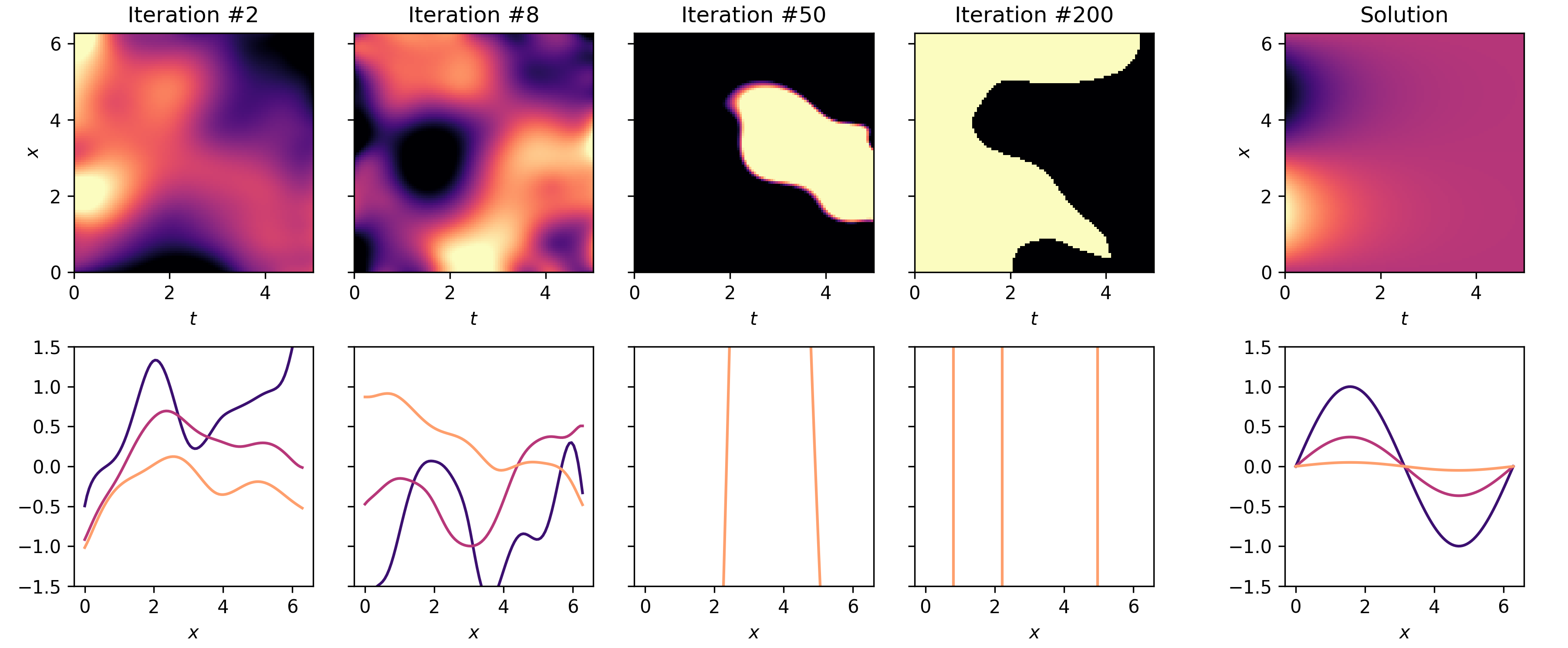}
        \caption{\algoacronym\ without preconditioning ($\lambda_k \equiv 1$)}
    \end{subfigure}
    \caption{
        \textbf{\algoacronym\ on the heat equation.}
        On the top row we visualize a heatmap of the iterates $h_n(t, x)$ and the solution $u(t, x)$, with $x \in [0, 2\pi]$ and $t \in [0, 1]$; on the bottom row, we visualize the solutions $u(t, x)$ at three points in time (\textcolor{magma1}{$t = 0$}, \textcolor{magma2}{$t = 0.5$} and \textcolor{magma3}{$t = 1$}).
        As the theory suggests, our method with preconditioning effectively and provably finds the correct solution to the heat equation.
        Without preconditioning, the procedure quickly diverges.
    }
    \label{fig:heat equation works}
\end{figure}

\subsection{Case I: Heat equation}
As a first example, consider the heat equation
\begin{equation}
    \begin{cases}
        \partial_t u(t, \bx) = \Delta u (t, \bx) &\quad (t, \bx) \in \Omega \defeq (0, T) \times U, \\
        u(t, \bx) = f(t, \bx) &\quad (t, \bx) \in \Lambda \subset \partial \Omega,
    \end{cases}
\end{equation}
for some open set $U \subset \bbR^d$ and boundary condition $f \in L^2(\partial \Omega)$ over the parabolic boundary of $\Omega$, given by $\Lambda = (\{0\} \times U) \cup ((0, T) \times \partial U) $.
When written in the form of \Cref{eq:motivating pde} we see that the corresponding differential operator $L = \partial_t - \Delta$ is linear, rendering the optimization problem convex.
For the sake of concreteness, we shall consider the following special case:
\begin{equation}
    \label{eq:concrete heat equation}
    \begin{cases}
        \partial_t u(t, x) = \Delta u (t, x), &\quad t \in (0, T), x \in (0, 2\pi), \\
        u(0, x) = \sin x, &\quad x \in (0, 2\pi), \\
        u(t, 0) = u(t, 2\pi) = 0, &\quad t \in (0, T).
    \end{cases}
\end{equation}
This has a known unique solution given by $u(t, x) = e^{-t} \sin x$, allowing us to evaluate our procedure.

\begin{figure}[t]
    \centering
    \hfill
    \includegraphics[width=.9\textwidth]{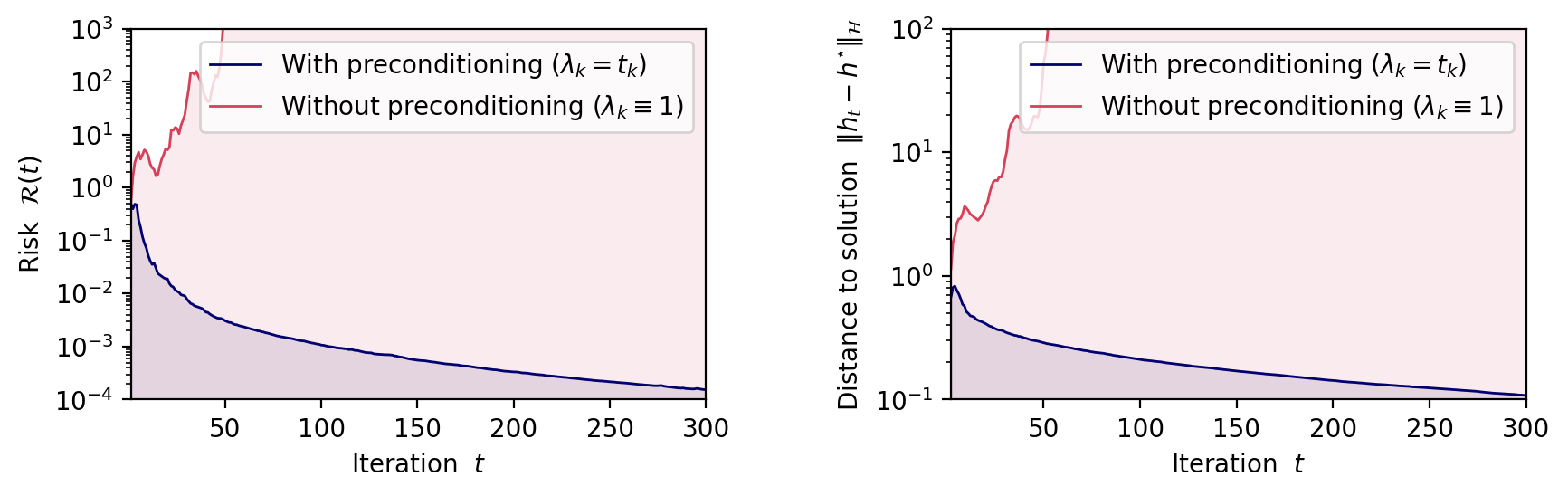}
    \caption{
        \textbf{Convergence of \algoacronym\ on the heat equation.}
        The plots display (left) the evolution of the risk $\risk(h_n)$ over iterations, and (right) the evolution of the distance from the estimates $h_n$ to the optimal solution $h^\star$ in $\bbH^2(\Omega)$-norm.
        Note how our algorithm with preconditioning steadily reduces both the risk and distance to the solution, whereas without preconditioning it quickly diverges.
    }
    \label{fig:heat equation learning curves}
\end{figure}

To solve \Cref{eq:concrete heat equation}, we let $\Omega = (0, T) \times (0, 2 \pi)$ and construct the risk functional \eqref{eq:pde risk} in $\bbH^2(\Omega)$: %
\begin{align*}
    \risk(h)
    &= \frac{1}{2} \int_0^T \int_0^{2\pi} \bigl[ \partial_t h(t, x) - \Delta h(t, x) \bigr]^2 \drm x \drm t
        + \frac{1}{2} \int_0^{2\pi} \bigl[ h(0, x) - \sin x \bigr]^2 \drm x
    \\ &\qquad
        + \frac{1}{2} \int_0^T \bigl[ h(t, 0) - 0 \bigr]^2 \drm t
        + \frac{1}{2} \int_0^T \bigl[ h(t, 2\pi) - 0 \bigr]^2 \drm t.
\end{align*}
Note that, by the linearity of $L$ and the discussion in Section \ref{sec:motivation}, this is a well-defined, convex, Fréchet differentiable risk functional on $\bbH^2(\Omega)$, rendering our theory immediately applicable.

We take $T = 5$, $\Kdist = \PoissonPofTwo(64)$ with sample sizes $M_k = \lceil k / 2 \rceil$, starting point $h_1 \equiv 0$ and constant learning rates $\alpha_n \equiv 0.2$.
For our pre-basis, we take $\eta = 10$ and sample centers $\{\bx_i\}_{i\in\bbN}$ over the scaled domain $2 \cdot \Omega$ with the Roberts quasi-random sequence \cite{robertsqmc}; this ensures that the centered kernels from \Cref{thm:sobolev pre basis} form a proper pre-basis.

\Cref{fig:heat equation works} illustrates our procedure in two settings: with preconditioning (i.e., $\lambda_k = t_k$), and without (i.e., $\lambda_k \equiv 1$).
As we can see, under preconditioning we fall under the regime of \Cref{thm:risk bound,thm:risk bound smooth}, and our method converges to the true solution.
However, without preconditioning, which is equivalent to running a naïve SGD method, our method quickly diverges.
This is most likely caused by the high variance of the stochastic gradients, as noted in \Cref{cor:no precond high var}.
In a similar vein, \Cref{fig:heat equation learning curves} clearly displays the convergence of our preconditioned method to the minimizer, while also exhibiting the quick divergent behavior produced by vanilla stochastic gradients.

\subsection{Case II: Poisson equation over an L-shaped domain}
Consider the Poisson equation:
\begin{equation}
    \label{eq:Poisson L-shaped}
    \begin{cases}
        \Delta u(\bx) = \phi(\bx) &\quad \bx \in \Omega, \\
        u(\bx) = f(\bx) &\quad \bx \in \partial \Omega,
    \end{cases}
\end{equation}
where $\Omega \subset \bbR^d$, $\phi \in L^2(\Omega)$ is given and $f \in L^2(\partial \Omega)$ is a boundary condition.
In the notation of \Cref{eq:motivating pde}, the residual differential operator is given by $L[u] = \Delta u - \phi$.

In this example, we wish to explore our method's performance in a problem whose domain $\Omega$ is not as simple as a box or as a sphere, and thus where obtaining a full orthonormal basis for $\bbH^2(\Omega)$ may be prohibitively hard.
Therefore, we set $d = 2$ and take $\Omega$ to be L-shaped, $\Omega = (-\pi, \pi)^2 \setminus \bigl( [0, \pi] \times [-\pi, 0] \bigr)$, $\phi(x,y) = - (x^2 + y^2) \sin (x y)$ and $f(x,y) = 0$.
Its unique solution is given by $u(x, y) = \sin(x y)$,
and the corresponding risk functional \eqref{eq:pde risk} in $\bbH^2(\Omega)$:
\begin{equation*}
    \begin{aligned}
        \risk(h) 
        &= \frac{1}{2}\int_{\Omega} (\Delta h(x, y) + (x^2 + y^2)\sin(xy))^2 \drm x \ddrm y + \frac{1}{2} \int_{\partial \Omega} h(x, y)^2 \drm x \ddrm y.
    \end{aligned}
\end{equation*}
Since the differential operator $L: \bbH^2(\Omega) \to L^2(\Omega)$ is affine, $\risk$ is a well-defined, Fréchet differentiable, convex functional over $\bbH^2(\Omega)$.

For this experiment, we take $\Kdist = \PoissonPofTwo(128)$ with sample sizes $M_k = \lceil k / 4 \rceil$, starting point $h_1 \equiv 0$ and constant learning rates $\alpha_n \equiv 0.1$.
We also apply the correct preconditioning $\lambda_k = t_k$.
For our pre-basis, we once again take $\eta = 10$, but now we use the Roberts sequence to sample centers $\{\bx_i\}_{i\in\bbN}$ over the larger square $(-2\pi, 2\pi)^2$ containing the L-shaped domain $\Omega$.
\Cref{fig:Poisson equation works and learning curve} depicts the results, which show that our method still converges to the solution of the PDE in spite of the more challenging geometry of $\Omega$ when compared to that of Case I.

\begin{figure}[t]
    \centering
    \includegraphics[width=.9\textwidth]{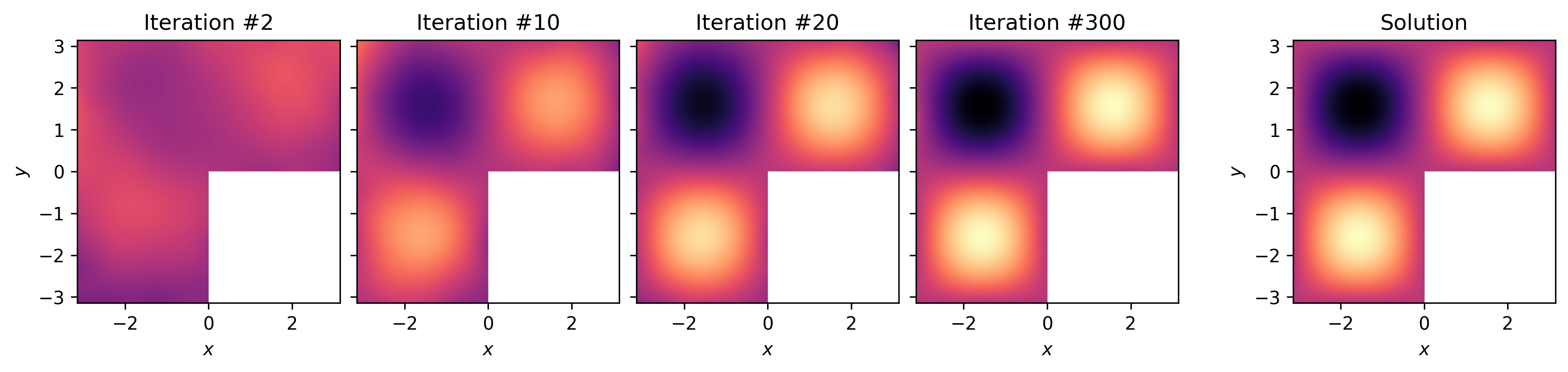}
    \caption{
        \textbf{\algoacronym\ on the Poisson equation in an L-shaped domain.}
        We visualize heatmaps of the solution and our method's estimates over the L-shaped domain.%
        We can see that even though the domain $\Omega$ is not as simple as a box or a sphere, our method still exhibits a solid performance.
    }
    \label{fig:Poisson equation works and learning curve}
\end{figure}

\subsection{Case III: Hamilton-Jacobi-Bellman equation}
For our third example, we consider a $(1+d)$-dimensional HJB equation:
\begin{equation}
    \label{eq:HJB equation}
    \begin{cases}
        \displaystyle
        \partial_t u(t, \bx) + H(\bx, \partial_{\bx} u(t, \bx), \partial_{\bx\bx} u (t, \bx)) = 0,
        &\quad (t, \bx) \in (0, T) \times \bbR^d, \\
        u(T, \bx) = \bx^\trp \bD \bx &\quad \bx \in \bbR^d,
    \end{cases}
\end{equation}
where the Hamiltonian $H: \bbR^d \times \bbR^d \times \bbR^{d^2} \to \bbR$ is given by
\begin{equation}
    \label{eq:hamiltonian}
    \begin{aligned}
        H(\bx, \bp, \bM) &= \bx^\trp \bQ \bx + \bx^\trp \bA^\trp \bp - \frac{1}{4} (\bB^\trp \bp)^\trp \bS^{-1} (\bB^\trp \bp) + \frac{1}{2} \trace (\bC \bC^\trp \bM),
    \end{aligned}
\end{equation}
with optimal control $\bc^{\star} = -\frac{1}{2} \bS^{-1}\bB^{\trp} \bp$. We require the matrices $\bQ, \bS$ and $\bD$ to be symmetric and $\bS$ to also be positive definite.
This particular HJB equation is satisfied by the value function of the following stochastic control problem:
\begin{equation}
    \label{eq:control problem}
    \begin{aligned}
        \inf_{(\bc_s)_{s \in [t, T]}}
        \quad & \mean \left[ \int_t^T \big[ \bx_s^\trp \bQ \bx_s + \bc_s^\trp \bS \bc_s \big] \drm s + \bx_T^\trp \bD \bx_T \right],
    \end{aligned}
\end{equation}
such that $\ddrm \bx_t = [\bA \bx_t + \bB \bc_t] \drm t + \bC \ddrm \bw_t, \quad \bx_0 = \bx$, where $\bw_t$ is a $d$-dimensional standard Brownian motion.
The closed-form solution to this nonlinear, multidimensional PDE is given by
\begin{equation*}
    \ustar (t, \bx) = \bx^\trp \bP(t) \bx + \int_{t}^T \trace(\bC^\trp \bP(s) \bC) \drm s,
\end{equation*}
where $\bP(t)$ solves a well-known Riccati ODE.
In this section we will focus on the case where $\bQ = \bS = \bD = d^{-1} \bI$ and $\bA = \bB = \bC = \bI$, under which the solution is known to be given by $\ustar(t, \bx) = \frac{p(t)}{d}\norm{\bx}^2 + q(t)$, where $p(t) = 1 + \sqrt{2} \tanh(\sqrt{2}(T - t))$ and $q(t) = \int_t^T p(s) \drm s$; the optimal control is given in feedback form by $\bc^{\star}(t, \bx) = - \frac{d}{2} \partial_{\bx} \ustar(t, \bx)$.
We will now describe how to appropriately formulate the risk functional $\risk : \bbH^2 \to \bbR$ for this PDE.

\begin{figure}[t]
    \centering
    \centering
    \includegraphics[width=.9\textwidth]{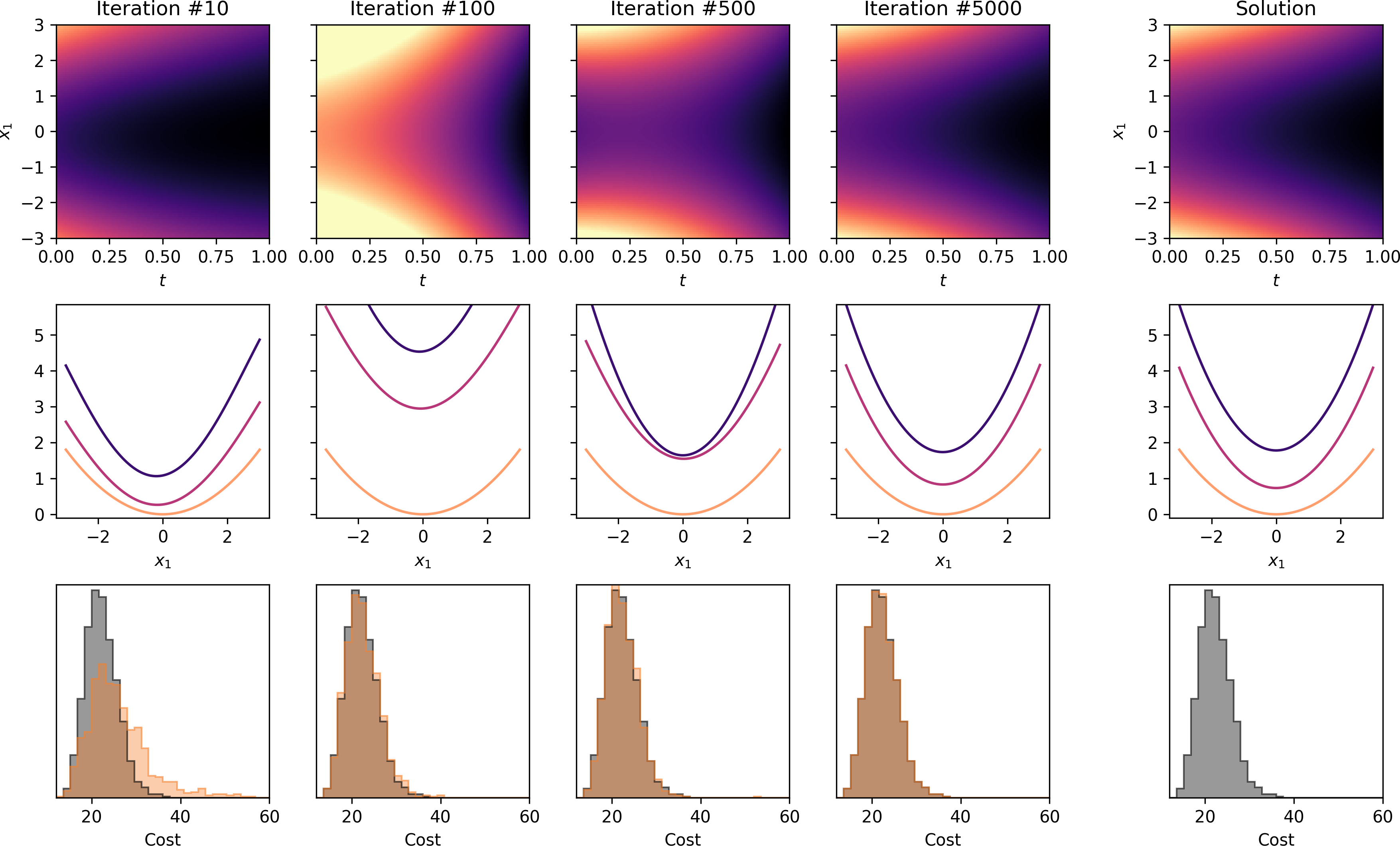}
    \caption[\algoacronym\ on a $(1+5)$-dimensional HJB equation.]{
        \textbf{\algoacronym\ on a $(1+5)$-dimensional HJB equation.}
        The first line shows heatmaps of the solution $\ustar$ and our method's iterates $h_n$ over $(t, \bx_1)$ while $\bx_2, \ldots, \bx_d$ are set to $0$.
        The second line shows slices of the heatmaps for times \textcolor{magma1}{$t = 0$}, \textcolor{magma2}{$t = 0.5$} and \textcolor{magma3}{$t = 1$}.
        The third line shows the cost distributions we obtain when we simulate the control problem \eqref{eq:control problem} using both the optimal control $\bc_s = \bc^{\star}(t, \bx_s)$ and the control $\bc_s = - \frac{d}{2} \partial_{\bx} h_n(t, \bx_s)$, induced by our approximate value functions, with total cost given by $d^{-1}\big[ \int_0^T \norm{\bx_s}^2 + \norm{\bc_s}^2 \drm s  + \norm{\bx_T}^2 \big]$ .
    }
    \label{fig:hjb equation works}
\end{figure}

We start by noting that, although the PDE \eqref{eq:HJB equation} is defined in $(0, T) \times \bbR^d$, in practice we wish to solve it in a bounded subdomain $\Omega = (0, T) \times (-\xbar, \xbar)^d $, where $\xbar > 0$ is a constant.
However, since the PDE is well-defined for any $\xbar > 0$, we formulate the optimization problem over a larger domain $\Omegatilde = (0, T) \times (-\Xbar, \Xbar)^d$, where $\Xbar > \xbar$, and only evaluate the solution over $\Omega$.
This is a common strategy to improve the quality of the solution on the target domain $\Omega$.

Next, we need to adapt the framework described in \Cref{sec:motivation} due to the nonlinearity of the PDE. Because the Hamiltonian \eqref{eq:hamiltonian} is a quadratic function of $\bp = \partial_{\bx} u(t, \bx)$, to ensure the risk is well-defined over $\bbH^2(\Omegatilde)$ for any dimension $d \in \bbN$ we make a slight modification to this nonlinearity when defining the risk.
Let $\psi : \bbR^d \to \bbR$ be the Lipschitz-continuous approximation of $\norm{\cdot}^2$ given by
\begin{equation*}
    \psi(\bp) = \begin{cases}
        \norm{\bp}^2 & \quad \text{if} \quad \norm{\bp} \leq B, \\
        2 B \norm{\bp} - B^2 &\quad \text{otherwise},
    \end{cases}
\end{equation*}
where $B > 0$ is some large constant.
With $\psi$, we define the modified Hamiltonian
\begin{equation*}
    \Htilde(\bx, \bp, \bM) = d^{-1}\norm{\bx}^2 + \bx^\trp \bp - \frac{d}{4} \psi(\bp) + \frac{1}{2} \trace(\bM),
\end{equation*}
and use it to define the functional $\Ecal : \bbH^2(\Omegatilde) \to \bbR$ through
\begin{equation}
    \begin{aligned}
    \Ecal(h) &= \frac{1}{2} \int_{\Omegatilde} \big[ \partial_t h(t, \bx) + \Htilde(\bx, \partial_{\bx}h(t, \bx), \partial_{\bx\bx}h(t, \bx)) \big]^2 \drm t \drm \bx \\
    &\hspace{1.5cm}
    + \frac{1}{2} \int_{(-\Xbar, \Xbar)^d} \big[ h(T, \bx) - d^{-1}\norm{\bx}^2 \big]^2 \drm \bx.
    \end{aligned}
\end{equation}
In practice, for $B$ much larger than $\sup_{(t, \bx) \in \Omegatilde} \norm{\partial_{\bx}\ustar(t, \bx)}$, this modified Hamiltonian still induces a PDE whose solution is $\ustar$, and the functional $\Ecal$ equals its version using $H$ instead of $\Htilde$ for all functions observed in our optimization algorithm. %
Finally, we use $\Ecal$ to define the actual risk functional $\risk: \bbH^2(\Omegatilde) \to \bbR$ in a way that already guarantees a correct boundary condition:
\begin{equation}
    \label{eq:hjb risk}
    \risk(h) = \Ecal(\EnforceBC(h)),
\end{equation}
where $\EnforceBC : \bbH^2(\Omegatilde) \to \bbH^2(\Omegatilde)$ is given by
\begin{equation}
    \label{eq:bc operator}
    \EnforceBC(h)(t, \bx) = (T - t) h(t, \bx) + d^{-1}\norm{\bx}^2.
\end{equation}
This last step is standard in optimization-based PDE solvers \cite{cohen2026neuralactorcriticmethodshamiltonjacobibellman, cohen2023neuralqlearning, cohen2026globalconvdgmsiamfinancialmath}, being a simple way to improve performance, particularly in high-dimensional scenarios.
\begin{proposition}
    \label{prop:hjb risk properties}
    The functional defined by \Cref{eq:hjb risk} is a well-defined, Fréchet-differentiable, nonconvex functional over $\bbH^2(\Omegatilde)$.
\end{proposition}
\begin{proof}
    See \Cref{sec:proof of hjb risk properties}.
\end{proof}

For this experiment we take $T = 1, d = 5, \xbar = 3$, $\Xbar = 5$, $\Kdist = \PoissonPofTwo(1024)$ with sample sizes $M_k = \lceil k / 2 \rceil$, starting point $h_1 \equiv 0$, constant learning rates $\alpha_n \equiv 0.02$,
and use the same Matérn-based pre-basis with $\eta = 10$ and centers sampled from the Roberts sequence over $20 \cdot \Omegatilde$.
We furthermore use the proper preconditioning $\lambda_k = t_k$.
\Cref{fig:hjb equation works} shows the results of this experiment.
We see that despite the risk being nonconvex, our method still effectively converges to the correct solution, highlighting its versatility.

\section{Conclusion}

In this paper, we introduced a random gradient-free method for optimization in infinite-dimensional separable Hilbert spaces, avoiding the computational challenges common to functional gradient descent.
By using only directional derivatives and a pre-basis, our approach offers a practical and broadly applicable framework for solving functional optimization problems.
Our work expands the applicability of functional gradient methods, particularly to settings with specialized losses and for which functional gradients would need to be computed accordingly.
Future work will consider variants such as constrained, mirror and accelerated versions of our algorithm.

\section*{Acknowledgments}
We would like to thank Alfredo Iusem and Fábio Ramos for the helpful suggestions and comments.
We disclose the use of LLMs as minor writing assistance tools, and we naturally assume responsibility for all content.

\printbibliography

\appendix

\section{Proofs of selected results}

\subsection{Proof of \Cref{prop: unbiased precond gradient}}
\label{sec:proof of unbiased precond gradient}

The fact that $\mean [v \mid K] = 0$ is obvious from \Cref{eq:v definition}, since $\mean [\bz \mid K] = \bzero$ and then
\begin{equation*}
    \mean[v \mid K] = B_K^{\vphantom{-1}} \bR_K^{-1} \bT_K^{-\frac{1}{2}} \mean[\bz \mid K] = 0.
\end{equation*}

To show that $\mean[v] = 0$, we need to show that $\mean[\dotprod{v, h}] = 0$ for any $h \in \Hcal.$
This is obtained if we use the tower property of conditional expectations to compute $\mean[\dotprod{v, h}] = \mean[\mean[\dotprod{v, h} \mid K]] = \mean[\dotprod{\mean[v \mid K], h}] = 0$.
However, to make sure that the conditional expectations we are using actually exist and we can use the tower property, we must first ensure that $\dotprod{v, h} \in L^1(\Scal, \Fcal, \prob)$.
This is done in the following lemma:
\begin{lemma}
    \label{lem:integrability for v mean}
    For any $h \in \Hcal$, $\dotprod{v, h} \in L^1(\Scal, \Fcal, \prob)$.
\end{lemma}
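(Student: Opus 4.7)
The plan is to show $\mathbb{E}|\langle v, h\rangle| < +\infty$ by first computing the conditional second moment of $\langle v, h\rangle$ given $K$, and then unconditionalizing in a way that telescopes the tail probabilities $t_i$ against the Parseval expansion of $h$.

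First, I would simplify the expression for $v$ given $K=k$. Since $B_k = Q_k \bR_k$ is the QR decomposition, the update in \eqref{eq:v definition} reduces to $v = Q_k \bT_k^{-1/2} \bz$. Because $Q_k = [e_1, \ldots, e_k]$ is an isometry from $\bbR^k$ into $\Hcal$, for any fixed $h \in \Hcal$ we have $\langle v, h\rangle = \langle \bz, \bT_k^{-1/2} Q_k^* h\rangle_{\bbR^k}$, and the vector $Q_k^* h \in \bbR^k$ has $i$-th coordinate $\langle h, e_i\rangle$. Since $\bz \sim \Ncal(\bzero, \bI_k)$ has finite moments, $\langle v, h\rangle$ is conditionally Gaussian with zero mean and variance
\begin{equation*}
    \mean[\langle v, h\rangle^2 \mid K = k] = \sum_{i=1}^k \frac{\langle h, e_i\rangle^2}{t_i}.
\end{equation*}

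Next, I would bound $\mean|\langle v, h\rangle|$ by conditioning on $K$ and applying Jensen. Specifically,
\begin{equation*}
    \mean|\langle v, h\rangle| = \mean\bigl[\mean[|\langle v, h\rangle| \mid K]\bigr] \leq \mean\bigl[(\mean[\langle v, h\rangle^2 \mid K])^{1/2}\bigr] \leq \Bigl(\mean\bigl[\mean[\langle v, h\rangle^2 \mid K]\bigr]\Bigr)^{1/2}.
\end{equation*}
(The use of iterated conditioning is legitimate here because $|\langle v, h\rangle|$ is nonnegative, so the inner conditional expectation is well-defined as an element of $[0, +\infty]$ without needing integrability a priori.) The inner expectation equals $\sum_{i=1}^K \langle h, e_i\rangle^2 / t_i$, and swapping sum and expectation using $\prob[K \geq i] = t_i$ gives
\begin{equation*}
    \mean\!\left[\sum_{i=1}^K \frac{\langle h, e_i\rangle^2}{t_i}\right] = \sum_{i=1}^\infty \frac{\langle h, e_i\rangle^2}{t_i}\,\prob[K \geq i] = \sum_{i=1}^\infty \langle h, e_i\rangle^2 = \norm{h}^2,
\end{equation*}
by Parseval's identity applied to the orthonormal basis $\basis$. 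Consequently $\mean|\langle v, h\rangle| \leq \norm{h} < +\infty$, which proves $\langle v, h\rangle \in L^1$.

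The only step that needs a bit of care is the justification of $\mean[\langle v, h\rangle^2 \mid K=k] = \sum_{i=1}^k \langle h, e_i\rangle^2 / t_i$. This is a completely elementary computation because conditionally on $K=k$ everything is finite-dimensional, but one should write it out using the linear-isometry characterization of $Q_k$ (rather than manipulating the quasi-matrix $B_k$ directly) to make the identification with coordinates $\langle h, e_i\rangle$ transparent. Everything else is Jensen and telescoping of tail probabilities, so I do not expect any real obstacle.
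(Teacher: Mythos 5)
Your proposal is correct and follows essentially the same route as the paper's own proof: reduce $v$ to $Q_K \bT_K^{-\frac{1}{2}}\bz$, use the tower property (justified by nonnegativity), apply Jensen twice to pass to the conditional second moment $\sum_{i=1}^K \dotprod{h,e_i}^2/t_i$, and then use Tonelli plus $\prob[K\geq i]=t_i$ to telescope down to $\norm{h}^2$ via Parseval. The resulting bound $\mean\abs{\dotprod{v,h}}\leq\norm{h}$ matches the paper's exactly.
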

\begin{proof}
    Since the random variable $\abs{\dotprod{v, h}}$ is non-negative, its conditional expectation given $K$ always exists, even if $\abs{\dotprod{v, h}}$ is not integrable.
    Therefore, \emph{now} we can use the tower property and write:
    \begin{equation*}
        \mean[\abs{\dotprod{v, h}}]
        = \mean[\mean [ \abs{\dotprod{v, h}} \mid K ]]
        \leq \mean \left[\mean[\dotprod{v, h}^2 \mid K]^{\frac{1}{2}}\right]
        = \mean \left[\mean[\dotprod{Q_K^{\vphantom{-1}} \bT_K^{-\frac{1}{2}} \bz, h}^2 \mid K]^{\frac{1}{2}}\right],
    \end{equation*}
    where we have used Jensen's inequality and the identity $v = Q_K^{\vphantom{-1}}\bT_K^{-\frac{1}{2}} \bz$, which is immediate from \eqref{eq:v definition}.
    Now note that, given $K$, we have
    \begin{equation*}
        \dotprod{Q_K^{\vphantom{-1}} \bT_K^{-\frac{1}{2}}\bz, h} = \dotprod{\bz, \bT_K^{-\frac{1}{2}} Q_K^* h} \sim \Ncal(0, \norm*{\bT_K^{-\frac{1}{2}} Q_K^{*\vphantom{-1}} h}^2).
    \end{equation*}
    This implies:
    \begin{equation*}
        \begin{aligned}
        \mean \left[\mean[\dotprod{Q_K^{\vphantom{-1}} \bT_K^{-\frac{1}{2}} \bz, h}^2 \mid K]^{\frac{1}{2}}\right]
        &= \mean \left[
            \norm*{\bT_K^{-\frac{1}{2}} Q_K^{*\vphantom{-1}} h}
        \right] \\
        &\leq \mean \left[
            \norm*{\bT_K^{-\frac{1}{2}} Q_K^{*\vphantom{-1}} h}^2
        \right]^{\frac{1}{2}}
        = \mean \left[
            \sum_{i=1}^K \frac{\dotprod{h, e_i}^2}{t_i}
        \right]^{\frac{1}{2}}.
        \end{aligned}
    \end{equation*}
    Now we rewrite the summation and apply Tonelli's Theorem to obtain
    \begin{align*}
        \mean \left[
            \sum_{i=1}^K \frac{\dotprod{h, e_i}^2}{t_i}
        \right]^{\frac{1}{2}}
        &=  \mean \left[
            \sum_{i=1}^\infty \frac{\ind\{ K \geq i \} \dotprod{h, e_i}^2}{t_i}
        \right]^{\frac{1}{2}} \\
        &= \left(
            \sum_{i=1}^\infty \mean \left[ \frac{\ind\{ K \geq i \} \dotprod{h, e_i}^2}{t_i} \right]
        \right)^{\frac{1}{2}}
        = \left(
            \sum_{i=1}^\infty \dotprod{h, e_i}^2
        \right)^{\frac{1}{2}}
        = \norm{h}.
    \end{align*}
    Thus, $\mean[\abs{\dotprod{v, h}}] \leq \norm{h} < +\infty$.
\end{proof}

Since the distribution of $v$ given $K$ has zero mean and is finite dimensional, we have $\Cov[v \mid K] = \mean [v \otimes v \mid K]$.
We expand this expression, using again that $v = Q_K^{\vphantom{-1}} \bT^{-\frac{1}{2}}_K \bz$:
\begin{align*}
    \mean [v \otimes v \mid K]
    &= \mean [Q^{\vphantom{-\frac{1}{2}}}_K \bT_K^{-\frac{1}{2}} \bz \otimes Q^{\vphantom{-\frac{1}{2}}}_K \bT_K^{-\frac{1}{2}} \bz \mid K] \\
    &= \mean [Q^{\vphantom{-\frac{1}{2}}}_K \bT_K^{-\frac{1}{2}} (\bz \otimes \bz) \bT_K^{-\frac{1}{2}} Q^{\vphantom{-\frac{1}{2}}*}_K \mid K]
    = Q^{\vphantom{-\frac{1}{2}}}_K \bT_K^{-\frac{1}{2}} \mean [ \bz \otimes \bz \mid K ] \bT_K^{-\frac{1}{2}} Q^{\vphantom{-\frac{1}{2}}*}_K.
\end{align*}
By the definition of $\bz$ in \Cref{eq:v definition}, we have $\mean [\bz \otimes \bz \mid K] = \bI_K$.
Thus: %
\begin{equation*}
   \mean [v \otimes v \mid K] 
   = Q^{\vphantom{-\frac{1}{2}}}_K \bT_K^{-1} Q^{\vphantom{-\frac{1}{2}}*}_K.
\end{equation*}
This shows that $\Cov[\vdistgivenK] = \Cov[v \mid K] = Q^{\vphantom{-\frac{1}{2}}}_K \bT_K^{-1} Q^{\vphantom{-\frac{1}{2}}*}_K$.

To obtain $\Cov[\vdist] = \Cov[v] = I$, we must take $h, h' \in \Hcal$ and show that $\mean[\dotprod{h, v} \dotprod{h', v}] = \dotprod{h, h'}$.
Once again, we would like to proceed by conditioning on $K$.
The next lemma shows that this is possible:
\begin{lemma}
    \label{lem:integrability for v cov}
    For any $h, h' \in \Hcal$, we have $\dotprod{h, v}\dotprod{h', v} \in L^1(\Scal, \Fcal, \prob)$.
\end{lemma}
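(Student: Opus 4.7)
The plan is to reduce integrability of the product $\dotprod{h,v}\dotprod{h',v}$ to integrability of the individual squares. By the AM--GM inequality,
\begin{equation*}
    |\dotprod{h,v}\dotprod{h',v}| \leq \tfrac{1}{2}\bigl(\dotprod{h,v}^2 + \dotprod{h',v}^2\bigr),
\end{equation*}
so it suffices to show $\mean[\dotprod{h,v}^2] < +\infty$ for every $h \in \Hcal$. Equivalently, one could use Cauchy--Schwarz in $L^2(\Scal,\Fcal,\prob)$; AM--GM is just marginally simpler.

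Next, I would compute the second moment by mirroring the strategy of \Cref{lem:integrability for v mean}, exploiting the non-negativity of $\dotprod{h,v}^2$ so that tower/Tonelli applies with no \emph{a priori} integrability obligation. From $B_K = Q_K \bR_K$ and \Cref{eq:v definition}, we have $v = Q_K^{\vphantom{-\frac12}} \bT_K^{-\frac12}\bz$, so taking adjoints gives $\dotprod{h, v} = \dotprod{\bT_K^{-\frac12} Q_K^{*\vphantom{-1}} h, \bz}$. Since $\bz\mid K \sim \Ncal(\bzero, \bI_K)$, this is conditionally Gaussian with variance $\norm*{\bT_K^{-\frac12} Q_K^{*\vphantom{-1}} h}^2$, hence
\begin{equation*}
    \mean[\dotprod{h,v}^2 \mid K] = \sum_{i=1}^K \frac{\dotprod{h,e_i}^2}{t_i}.
\end{equation*}

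Finally, I would apply Tonelli's theorem and use $\prob[K \geq i] = t_i$ to telescope the sum:
\begin{equation*}
    \mean[\dotprod{h,v}^2]
    = \mean\!\left[\sum_{i=1}^\infty \ind\{K \geq i\}\frac{\dotprod{h,e_i}^2}{t_i}\right]
    = \sum_{i=1}^\infty \dotprod{h,e_i}^2 = \norm{h}^2 < +\infty.
\end{equation*}
Combined with the AM--GM bound, this gives $\mean[|\dotprod{h,v}\dotprod{h',v}|] \leq \tfrac{1}{2}(\norm{h}^2 + \norm{h'}^2) < +\infty$. There is no significant obstacle: the argument is a direct adaptation of \Cref{lem:integrability for v mean}, and in fact slightly simpler, since non-negativity of the squared inner product lets us compute the (possibly infinite) second moment by conditioning on $K$ without any preliminary integrability check.
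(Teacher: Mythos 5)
Your proposal is correct and follows essentially the same route as the paper: the paper bounds $\mean[\abs{\dotprod{h,v}\dotprod{h',v}}]$ by Cauchy--Schwarz in $L^2(\Scal,\Fcal,\prob)$ rather than AM--GM, and then invokes the identical second-moment computation $\mean[\dotprod{h,v}^2]=\norm{h}^2$ (conditioning on $K$, the conditional Gaussian variance $\norm*{\bT_K^{-\frac12}Q_K^{*}h}^2$, and Tonelli) by citing the proof of \Cref{lem:integrability for v mean}. The only difference is the cosmetic choice of elementary inequality, yielding $\tfrac12(\norm{h}^2+\norm{h'}^2)$ in place of $\norm{h}\norm{h'}$, both of which suffice.
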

\begin{proof}
    By Cauchy-Schwarz in $L^2(\Scal, \Fcal, \prob)$, we have
    \begin{equation*}
        \mean [\abs{\dotprod{h, v} \dotprod{h', v}}]
        \leq \mean[\dotprod{h, v}^2]^{\frac{1}{2}} \mean[\dotprod{h', v}^2]^\frac{1}{2}.
    \end{equation*}
    By the arguments in the proof of \Cref{lem:integrability for v mean} we know that
    \begin{equation*}
        \mean[\dotprod{h', v}^2]
        = \mean\left[\mean[\dotprod{h', v}^2 \mid K]\right]
        = \mean[ \norm*{\bT_K^{-\frac{1}{2}} Q_K^{*\vphantom{-1}} h}^2 ]
        = \norm{h}^2,
    \end{equation*}
    and likewise for $h'$.
    Therefore, $\mean [\abs{\dotprod{h, v} \dotprod{h', v}}] \leq \norm{h} \norm{h'} < +\infty$.
\end{proof}
Now, conditioning on $K$ we obtain
\begin{equation}
    \label{eq:cov v given k expansion}
    \begin{aligned}
    \mean [\dotprod{h, v} \dotprod{h', v}]
    &= \mean [ \mean[\dotprod{h, v} \dotprod{h', v} \mid K] ] \\
    &= \mean [ \dotprod{\Cov[v \mid K]h, h'} ]
    = \mean [ \dotprod{Q^{\vphantom{-\frac{1}{2}}}_K \bT_K^{-1} Q^{\vphantom{-\frac{1}{2}}*}_K h, h'} ]
    = \mean[ \dotprod{\bT_K^{-1} Q^{\vphantom{-1}*}_K h, Q^{\vphantom{-1}*}_Kh'}].
    \end{aligned}
\end{equation}
Since $Q_K = [e_1, \ldots, e_K]$ and the $e_i$ are orthonormal, we have
\begin{equation}
    \label{eq:Q_K inner product expansion}
    \dotprod{\bT_K^{-1} Q^{\vphantom{-1}*}_K h, Q^{\vphantom{-1}*}_Kh'}
    = \sum_{i=1}^K \frac{\dotprod{h, e_i} \dotprod{h', e_i}}{t_i}
    = \sum_{i=1}^\infty \frac{\ind\{ K \geq i \} }{t_i} \dotprod{h, e_i} \dotprod{h', e_i}.
\end{equation}
Taking expectations on both sides and applying Fubini to bring out the sum, we get
\begin{equation}
    \label{eq:fubini}
    \mean [\dotprod{h, v} \dotprod{h', v}]
    = \sum_{i=1}^\infty \frac{\mean[ \ind\{ K \geq i \} ]}{t_i} \dotprod{h, e_i} \dotprod{h', e_i}
    = \sum_{i=1}^\infty \dotprod{h, e_i} \dotprod{h', e_i}
    = \dotprod{h, h'}.
\end{equation}
Therefore, $\Cov[v] = I$.
Finally, to show $\mean_n[\ghat_n(K)] = Cg_n$, we will take an arbitrary $h \in \Hcal$ and show that $\mean_n[\dotprod{\ghat_n(K), h}] = \dotprod{C g_n, h}$.
The argument to show that $\dotprod{\ghat_n(K), h} \in L^1(\Scal, \Fcal, \prob)$ is very similar to the one presented in \Cref{lem:integrability for v cov}, so it will be omitted.
Start by conditioning on $K$ and recalling that $D\risk(h_n, v_m) = \dotprod{g_n, v_m}$:
\begin{align*}
    \mean_n[\dotprod{\ghat_n(K), h}]
    = \mean_n[\mean_n[\dotprod{\ghat_n(K), h} \mid K]]
    &= \mean_n \left[
        \frac{\lambda_K}{M_K} \sum_{m=1}^{M_K} 
        \mean_n [ \dotprod{g_n, v_m} \dotprod{h, v_m} \mid K ]
    \right] \\
    &= \mean_n \left[
        \lambda_K \mean_n [ \dotprod{g_n, v} \dotprod{h, v} \mid K ]
    \right],
\end{align*}
where we have used that $v_1, \ldots, v_{M_K}$ are i.i.d.\ with the same distribution as $v$.
By Equations \eqref{eq:cov v given k expansion} and \eqref{eq:Q_K inner product expansion} we know that
\begin{equation}
    \mean_n [ \dotprod{g_n, v} \dotprod{h, v} \mid K ]
    = \sum_{i=1}^\infty \frac{\ind \{ K \geq i \} }{t_i} \dotprod{g_n, e_i} \dotprod{h, e_i}.
\end{equation}
Now, again by Fubini we obtain
\begin{align*}
    \mean_n[\dotprod{\ghat_n(K), h}]
    &= \mean_n \left[
        \sum_{i=1}^\infty \frac{\lambda_K \ind \{ K \geq i \}}{t_i} \dotprod{g_n, e_i} \dotprod{h, e_i}.
    \right] \\
    &= \sum_{i=1}^\infty \mean \left[ \lambda_K \mid K \geq i \right] \dotprod{g_n, e_i} \dotprod{h, e_i}
    = \dotprod{C g_n, h},
\end{align*}
which finishes the proof.

\subsection{Proof of \Cref{thm:starting bound}}
\label{sec:proof starting bound}
Let us start by recalling that $\ghat_n \in \bigcup_{k=1}^\infty \prebasis_k \subset C(\Hcal)$ almost surely.
Thus, since $h_1 \in C(\Hcal)$ by assumption, we have $h_n \in C(\Hcal)$ for all $n = 1, \ldots, N$.
We emphasize that both $\ghat_n$ and $h_n$ belong to $C(\Hcal)$ in addition to $\Chalf(\Hcal)$ because we will use the following property of $\dotprod{\cdot, \cdot}_C$: if $h \in \Chalf(\Hcal)$ and $h' \in C(\Hcal)$, we have $\dotprod{h, h'}_C = \dotprod{h, \Cinv h'}$.

Set $g_n \defeq \nabla \risk (h_n)$ and expand $\frac{1}{2}\norm*{h_{n+1} - h}_{C}^2$:
\begin{align*}
    \frac{1}{2} \norm*{h_{n+1} - h}_{C}^2
    &= \frac{1}{2}\norm*{h_n - \alpha_n \ghat_{n} - h}_{C}^2 \\
    &= \frac{1}{2}\norm*{h_n - h}_{C}^2 + \frac{1}{2}\alpha_n^2 \norm*{\ghat_{n}}_{C}^2 - \alpha_n \dotprod{h_n - h, C^{-1}\ghat_n} \\
    &= \frac{1}{2}\norm*{h_n - h}_{C}^2 + \frac{1}{2}\alpha_n^2 \norm*{\ghat_{n}}_{C}^2
    - \alpha_n \dotprod{h_n - h, g_{n}} \\
    &\hspace{1.5cm}
    + \alpha_n \dotprod{h_n - h, g_{n} - C^{-1}\ghat_{n}}.
\end{align*}
By the convexity of $\risk$, we have $\risk(h) - \risk(h_n) \geq \dotprod{h - h_n, g_{n}}$,
which gives us
\begin{equation*}
    - \alpha_n \dotprod{h_n - h, g_{n}} \leq - \alpha_n (\risk(h_n) - \risk(h)).
\end{equation*}
Hence, we obtain
\begin{equation*}
    \begin{aligned}
        \frac{1}{2} \norm*{h_{n+1} - h}_{C}^2
        &\leq \frac{1}{2}\norm*{h_n - h}_{C}^2 + \frac{1}{2}\alpha_n^2 \norm*{\ghat_{n}}_{C}^2
        - \alpha_n (\risk(h_n) - \risk(h) ) \\
        &\hspace{1.3cm}
        + \alpha_n \dotprod{h_n - h, g_{n} - C^{-1}\ghat_{n}},
    \end{aligned}
\end{equation*}
which we rewrite as
\begin{equation*}
    \begin{aligned}
        & \alpha_n (\risk(h_n) - \risk(h)) \\
        &\hspace{1cm}
        \leq \frac{1}{2}\norm*{h_n - h}_{C}^2 + \frac{1}{2}\alpha_n^2 \norm*{\ghat_{n}}_{C}^2
        - \frac{1}{2} \norm*{h_{n+1} - h}_{C}^2 + \alpha_n \dotprod{h_n - h, g_{n} - C^{-1}\ghat_{n}}.
    \end{aligned}
\end{equation*}
Summing this inequality for $n = 1, \ldots, N$ we get
\begin{equation*}
    \sum_{n=1}^N \alpha_n (\risk(h_n) - \risk(h)) \leq \frac{1}{2} \norm*{h_1 - h}_{C}^2 + \frac{1}{2} \sum_{n = 1}^N \alpha_n^2 \norm*{\ghat_n}_{C}^2 + \sum_{n=1}^N \alpha_n \dotprod{h_n - h, g_n - C^{-1}\ghat_n}.
\end{equation*}
Again by convexity, we have $\risk(\hat{h}) \leq ( \sum_{n=1}^N \alpha_n \risk(h_n) ) / \sum_{n=1}^N \alpha_n$, implying
\begin{equation*}
    \begin{aligned}
        \risk(\hhat)
        - \risk(h)
        \leq
        \frac{
            \frac{1}{2}\norm*{h - h_1}_{C}^2
        }{
            \sum_{n=1}^N \alpha_n
        }
        + \frac{
            \frac{1}{2}\sum_{n=1}^N \alpha_n^2 \norm*{\ghat_n}_{C}^2
        }{\sum_{n=1}^N \alpha_n}
        + \frac{
            \sum_{n=1}^N \alpha_n \dotprod{h_n - h, g_n - C^{-1}\ghat_n}
        }{
            \sum_{n=1}^N \alpha_n
        }.
    \end{aligned}
\end{equation*}
Finally, we take expectations on both sides, hoping that the last term on the RHS will vanish.
Although we will show this to be true, the reason is \emph{not} because $\mean[C^{-1}\ghat_n] = C^{-1}\mean[\ghat_n]$.
Recall that $C^{-1}$ is a possibly unbounded operator, meaning that it does not necessarily commute with expectations, so we must proceed with care.
The result we will use is the following:
\begin{lemma}
    \label{lem:Cinv unbiasedness}
    If $\mean_n[\norm*{\ghat_n}_C^2] < +\infty$, then for any $h' \in \Chalf(\Hcal)$ we have
    \begin{equation*}
        \mean_n[\dotprod{h', C^{-1}\ghat_n}] = \dotprod{h', g_n}.
    \end{equation*}
\end{lemma}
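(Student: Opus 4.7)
The plan is to work coordinate-wise with respect to the orthonormal basis $\basis$ and reduce the claim to an exchange of a sum and an expectation, justified via the hypothesis $\mean_n[\norm{\ghat_n}_C^2] < +\infty$.

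First, I would observe that, given $K$, the estimator $\ghat_n$ is supported in the finite-dimensional subspace $\prebasis_K = \spann\{e_1,\ldots,e_K\}$, so $\ghat_n \in \Chalf(\Hcal)$ almost surely and $C^{-1}\ghat_n = \sum_{i=1}^{K} \gamma_i^{-1} \dotprod{\ghat_n, e_i} e_i$ is unambiguous. Writing $h' = \sum_i a_i e_i$ with $a_i = \dotprod{h', e_i}$, the assumption $h' \in \Chalf(\Hcal)$ means precisely $\sum_i a_i^2/\gamma_i < +\infty$, and
\begin{equation*}
    \dotprod{h', C^{-1}\ghat_n} = \sum_{i=1}^{\infty} \frac{a_i \dotprod{\ghat_n, e_i}}{\gamma_i}
\end{equation*}
(the sum is in fact finite almost surely). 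The target identity then reads $\mean_n\!\left[\sum_i a_i \dotprod{\ghat_n, e_i}/\gamma_i\right] = \sum_i a_i \dotprod{g_n, e_i}$.

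Next, I would compute the coordinate-wise conditional expectation. Conditioning on $K$, the computation in \Cref{sec:proof of unbiased precond gradient} (equations \eqref{eq:cov v given k expansion} and \eqref{eq:Q_K inner product expansion}) gives, for each $i$,
\begin{equation*}
    \mean_n\!\left[\dotprod{\ghat_n, e_i} \,\big|\, K\right] = \lambda_K \, \mean_n\!\left[\dotprod{g_n, v}\dotprod{v, e_i} \,\big|\, K\right] = \lambda_K \frac{\ind\{K \ge i\}}{t_i}\dotprod{g_n, e_i},
\end{equation*}
and integrating over $K$ and using $\gamma_i = \mean[\lambda_K \mid K \ge i]$ yields $\mean_n[\dotprod{\ghat_n, e_i}] = \gamma_i \dotprod{g_n, e_i}$. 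Formally summing $a_i/\gamma_i$ times this gives $\sum_i a_i \dotprod{g_n, e_i} = \dotprod{h', g_n}$, which is exactly the desired right-hand side.

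The main obstacle, and the only non-trivial step, is justifying the interchange of sum and expectation, since $C^{-1}$ is unbounded. For this I would apply Cauchy--Schwarz termwise: almost surely,
\begin{equation*}
    \sum_{i=1}^{\infty} \frac{\abs{a_i \dotprod{\ghat_n, e_i}}}{\gamma_i}
    \le \left(\sum_{i=1}^{\infty} \frac{a_i^2}{\gamma_i}\right)^{\!1/2}\!\left(\sum_{i=1}^{\infty} \frac{\dotprod{\ghat_n, e_i}^2}{\gamma_i}\right)^{\!1/2}
    = \norm{h'}_C \, \norm{\ghat_n}_C.
\end{equation*}
Taking conditional expectation and using Jensen gives $\mean_n[\sum_i \abs{a_i \dotprod{\ghat_n, e_i}}/\gamma_i] \le \norm{h'}_C \, \mean_n[\norm{\ghat_n}_C^2]^{1/2} < +\infty$ by hypothesis. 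Fubini--Tonelli then allows swapping the sum with $\mean_n$, and substituting the coordinate-wise identity derived above finishes the proof.
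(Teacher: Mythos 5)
Your proposal is correct and follows essentially the same route as the paper's proof: expand over the orthonormal basis, bound $\sum_i \abs{a_i\dotprod{\ghat_n,e_i}}/\gamma_i$ by $\norm{h'}_C\norm{\ghat_n}_C$ via Cauchy--Schwarz in $\ell^2$, invoke the hypothesis together with Jensen to get integrability, then condition on $K$, use $\mean_n[\dotprod{\ghat_n,e_i}\mid K]=\lambda_K\ind\{K\geq i\}\dotprod{g_n,e_i}/t_i$, and finish with Fubini and $\gamma_i=\mean[\lambda_K\mid K\geq i]$. The only cosmetic difference is the order of presentation (you state the coordinate-wise identity before the integrability check), which does not affect correctness.
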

\begin{proof}
    Analogously to the proof of \Cref{prop: unbiased precond gradient}, we will start by showing that $\dotprod{h', C^{-1}\ghat_n} \in L^1(\Scal, \Fcal, \prob_n)$, where $\prob_n$ is the conditional measure given $h_1, \ldots, h_n$.
    Expanding $h'$ and $\ghat_n$ over the basis $\basis$ we get
    \begin{align*}
        \mean_n [\abs*{\dotprod{h', \Cinv \ghat_n}}]
        &= \mean_n \left[ \abs{
            \sum_{i=1}^\infty \frac{\dotprod{h', e_i} \dotprod{\ghat_n, e_i}}{\gamma_i}
        } \right]
        \leq \mean_n \left[ \sum_{i=1}^\infty \frac{\abs{\dotprod{h', e_i}}}{\sqrt{\gamma_i}} \frac{\abs{\dotprod{\ghat_n, e_i}}}{\sqrt{\gamma_i}} \right].
    \end{align*}
Now, use Cauchy-Schwarz over the space of square-summable sequences to obtain
\begin{equation*}
    \sum_{i=1}^\infty \frac{\abs{\dotprod{h', e_i}}}{\sqrt{\gamma_i}} \frac{\abs{\dotprod{\ghat_n, e_i}}}{\sqrt{\gamma_i}}
    \leq \left(
        \sum_{i=1}^\infty \frac{\dotprod{h', e_i}^2}{\gamma_i}
    \right)^{\frac{1}{2}}
    \left(
        \sum_{i=1}^\infty \frac{\dotprod{\ghat_n, e_i}^2}{\gamma_i}
    \right)^{\frac{1}{2}}
    = \norm*{h'}_C \norm*{\ghat_n}_C.
\end{equation*}
Therefore, 
\begin{equation*}
    \mean_n [\abs*{\dotprod{h', \Cinv \ghat_n}}]
    \leq \norm*{h'}_C \mean_n[{\norm*{\ghat_n}_C}]
    \leq \norm*{h'}_C \mean_n\big[{\norm*{\ghat_n}^2_C}\big]^{\frac{1}{2}} < +\infty,
\end{equation*}
by assumption.
Thus, we can condition on $K$ and use the fact that $\ghat_n \in \prebasis_K$ to write
\begin{align*}
    \mean_n[\dotprod{h', \Cinv \ghat_n}]
    &= \mean_n\!\!\left[
        \mean_n\!\!\left[
            \sum_{i=1}^\infty\frac{\dotprod{h', e_i} \dotprod{\ghat_n, e_i}}{\gamma_i}
            \mid K
        \right]
    \right] \\
    &= \mean_n\!\!\left[
        \mean_n\!\!\left[
            \sum_{i=1}^K\frac{\dotprod{h', e_i} \dotprod{\ghat_n, e_i}}{\gamma_i} \mid K
        \right]
    \right]
    = \mean_n\!\!\left[
            \sum_{i=1}^K\frac{\dotprod{h', e_i} \mean_n[\dotprod{\ghat_n, e_i} \mid K]}{\gamma_i}
    \right]\!\!.
\end{align*}
Now recall that
\begin{equation*}
    \mean_n[\dotprod{\ghat_n, e_i} \mid K]
    = \lambda_K \dotprod{\bT_K^{-1} Q_K^{*\vphantom{-1}}g_n, Q_K^{*\vphantom{-1}}e_i}
    = \frac{\lambda_K \ind\{ K \geq i \}}{t_i} \dotprod{g_n, e_i}.
\end{equation*}
Thus, we have
\begin{equation*}
    \mean_n[\dotprod{h', \Cinv \ghat_n}]
    = \mean_n \left[
        \sum_{i=1}^{\infty} \frac{\lambda_K \ind\{ K \geq i \}}{t_i \gamma_i} \dotprod{h', e_i} \dotprod{g_n, e_i}
    \right].
\end{equation*}
Using Fubini and the definition of $\gamma_i$ as $\mean[\lambda_K \mid K \geq i]$, we obtain
\begin{equation*}
    \mean_n[\dotprod{h', \Cinv \ghat_n}]
    = \sum_{i=1}^\infty \dotprod{h', e_i} \dotprod{g_n, e_i} = \dotprod{h', g_n},
\end{equation*}
which finishes the proof of \Cref{lem:Cinv unbiasedness}.
\end{proof}

Going back to the proof of \Cref{thm:starting bound}, since $h \in \Chalf(\Hcal)$ by assumption, we have $h_n - h \in \Chalf(\Hcal)$ for all $n$, letting us apply \Cref{lem:Cinv unbiasedness} to conclude that
\begin{equation*}
    \mean [\dotprod{h_n - h, g_n - \Cinv \ghat_n}]
    = \mean [\mean_n[\dotprod{h_n - h, g_n - \Cinv \ghat_n}]]
    = 0.
\end{equation*}

\subsection{Proof of \Cref{thm:ghat variance}}
\label{sec:proof variance}
To simplify the notation in this proof, we will drop the subscript $n$ and write $\ghat$ instead of $\ghat_n$.
Start by using the symmetry of $\Cminushalf$, Tonelli and conditioning on $K$ to obtain
\begin{equation}
    \label{eq:first expansion}
    \mean [ \norm*{\ghat}^2_C ]
    = \mean \left[
        \sum_{i=1}^\infty \dotprod{\Cminushalf \ghat, e_i}^2
    \right]
    = \sum_{i=1}^\infty
    \mean \left[
        \frac{\lambda_K^2}{\mean[\lambda_K \mid K \geq i]} \mean[ \dotprod{\gbar, e_i}^2 \mid K ]
    \right],
\end{equation}
where we define
\begin{equation}
    \label{eq:gbar definition}
    \gbar \defeq \lambda_K^{-1} \ghat = \frac{1}{M_K} \sum_{m=1}^{M_K} \dotprod{g, v_m} v_m.
\end{equation}
Our goal now is to compute $\mean[ \dotprod{\gbar, e_i}^2 \mid K ]$.
Expanding the square and using the fact that $v_1, \ldots, v_{M_K}$ are i.i.d.\ given $K$, one may verify that
\begin{align}
    \mean [\dotprod{\gbar, e_i}^2 \mid K]
    &= \mean \left[
        \Bigg\langle
            \frac{1}{M_K} \sum_{m=1}^{M_K}
            \dotprod{g, v_m} v_m,
            e_i
        \Bigg\rangle^2
        \mid K
    \right] \nonumber \\
    &= \frac{1}{M_K} \mean[\dotprod{g, v}^2 \dotprod{v, e_i}^2 \mid K]
    + \left( 1 - \frac{1}{M_K} \right) \frac{\ind \{ K \geq i \}}{t_i^2} \dotprod{g, e_i}^2. \label{eq:second expansion}
\end{align}
To compute $\mean [\dotprod{g, e_i}^2 \dotprod{v, e_i}^2 \mid K]$ we start by rewriting:
\begin{align*}
    \dotprod{g, e_i}^2 \dotprod{v, e_i}^2
    &= \dotprod{
        (v \otimes v) (g \otimes g) (v \otimes v) e_i, e_i
    } \\
    &= \dotprod{
        (Q\bT^{-\frac{1}{2}} \bz \otimes Q \bT^{-\frac{1}{2}} \bz) (g \otimes g) (Q\bT^{-\frac{1}{2}} \bz \otimes Q \bT^{-\frac{1}{2}} \bz) e_i, e_i
    } \\
    &= \dotprod{
        Q \bT^{-\frac{1}{2}} (\bz \otimes \bz) (\bT^{-\frac{1}{2}} Q^* g \otimes \bT^{-\frac{1}{2}} Q^* g) (\bz \otimes \bz) Q^* \bT^{-\frac{1}{2}} e_i, e_i
    } \\
    &= \dotprod{
        (\bz \otimes \bz) \bL (\bz \otimes \bz) \bx, \bx
    }_{\bbR^K},
\end{align*}
where $\bL = \bT^{-\frac{1}{2}} Q^* g \otimes \bT^{-\frac{1}{2}} Q^* g \in \bbR^{K \times K}$ and $\bx = \bT^{-\frac{1}{2}} Q^* e_i \in \bbR^K$.
Hence,
\begin{equation*}
    \mean [ \dotprod{g, e_i}^2 \dotprod{v, e_i}^2 \mid K ]
    = \dotprod{
        \mean \left[
            (\bz \otimes \bz) \bL (\bz \otimes \bz) \mid K
        \right]
        \bx, \bx
    }.
\end{equation*}
Now, since $\bz \sim \Ncal (\bzero, \bI_K)$ and $\bL$ is symmetric, one may show that the following fourth-moment identity holds:
\begin{equation*}
    \mean \left[
        (\bz \otimes \bz) \bL (\bz \otimes \bz) \mid K
    \right]
    = (\trace \bL) \bI_K + 2 \bL.
\end{equation*}
This implies:
\begin{equation*}
    \mean [ \dotprod{g, e_i}^2 \dotprod{v, e_i}^2 \mid K ]
    = \dotprod{
        ( (\trace \bL) \bI_K + 2 \bL ) \bx, \bx
    }_{\bbR^K}
    = (\trace \bL) \norm{\bx}_{\bbR^K}^2 + 2 \dotprod{\bL \bx, \bx}_{\bbR^K}.
\end{equation*}
Substituting $\bL$ and $\bx$, we obtain
\begin{equation}
    \mean [ \dotprod{g, e_i}^2 \dotprod{v, e_i}^2 \mid K ]
    = \left(
        \sum_{j=1}^K \frac{\dotprod{g, e_j}^2}{t_j}
    \right) \frac{\ind \{ K \geq i \}}{t_i} + 2 \frac{\ind \{ K \geq i \}}{t_i^2} \dotprod{g, e_i}^2.
\end{equation}
Substituting in \eqref{eq:second expansion}, we find that
\begin{equation}
    \mean [\dotprod{\gbar, e_i}^2 \mid K]
    = \frac{1}{M_K} \left(
        \sum_{j=1}^K \frac{\dotprod{g, e_j}^2}{t_j}
    \right) \frac{\ind \{ K \geq i \}}{t_i}
    + \left( 1 + \frac{1}{M_K} \right) \frac{\ind \{ K \geq i \}}{t_i^2} \dotprod{g, e_i}^2.
\end{equation}
Now we simply substitute back in \eqref{eq:first expansion} and rearrange summations to obtain \eqref{eq:ghat variance}.

\subsection{Proof of \Cref{thm:risk bound smooth}}
\label{sec:proof of risk bound smooth}

Start by expanding
\begin{equation*}
    \norm{h_{n+1} - \hstar}_C^2 = \norm{h_n - \hstar}_C^2 - 2 \alpha_n \dotprod{h_n - \hstar, \ghat_n}_C + \alpha_n^2 \norm{\ghat_n}_C^2.
\end{equation*}
Take the conditional expectation $\mean_n$ on both sides and apply both \Cref{lem:Cinv unbiasedness} and \Cref{cor:variance bound} to get
\begin{equation}
    \label{eq:intermidiate smooth}
    \mean_n[\norm{h_{n+1} - \hstar}_C^2] \leq \norm{h_n - \hstar}_C^2 - 2 \alpha_n \dotprod{h_n - \hstar, g_n} + 2 \alpha_n^2 (1 + 2c)\norm{g_n}^2.
\end{equation}
Because $\risk$ is convex and has Lipschitz gradients, we have the following co-coercivity inequality \cite[Lemma~2.29]{garrigos2024handbookconvergencetheoremsstochastic}:
\begin{equation}
    \label{eq:co-coercivity}
    \frac{1}{\Lip(\nabla \risk)} \norm{\nabla \risk(h) - \nabla \risk(h')}^2 \leq \dotprod{\nabla \risk(h) - \nabla \risk(h'), h - h'},
\end{equation}
for any $h, h' \in \Hcal$.
Applying \eqref{eq:co-coercivity} to \eqref{eq:intermidiate smooth} with $h = h_n$ and $h' = \hstar$ and recalling that $\nabla \risk(\hstar) = 0$, we get
\begin{align}
    \mean_n[\norm{h_{n+1} - \hstar}_C^2] &\leq \norm{h_n - \hstar}_C^2 - \frac{2\alpha_n}{\Lip(\nabla\risk)} \norm{g_n}^2 + 2 \alpha_n^2 (1 + 2c) \norm{g_n}^2. \nonumber\\ 
    &= \norm{h_n - \hstar}_C^2 + 2\alpha_n \norm{g_n}^2 \left(
        \alpha_n (1 + 2c) - \frac{1}{\Lip(\nabla\risk)}
    \right).
\end{align}
If $\alpha_n \leq \frac{1}{\Lip(\nabla\risk)(1 + 2c)}$, then $\alpha_n (1 + 2c) - \frac{1}{\Lip(\nabla \risk)} \leq 0$, implying $\mean_n[\norm{h_{n+1} - \hstar}_C^2] \leq \norm{h_n - \hstar}_C^2$.
Proceeding by induction, we conclude that 
\begin{equation*}
\mean[\norm{h_n - \hstar}_C^2] \leq \norm{h_1 - \hstar}_C^2
\quad
\text{for all}
\quad
1 \leq n \leq N.
\end{equation*}
Since for $\lambda_k = t_k$ we have $\norm{\cdot} \leq \norm{\cdot}_C$, this implies
\begin{align*}
    \mean[\norm{g_n}^2] &\leq \Lip(\nabla\risk)^2 \mean[ \norm{h_n - \hstar}^2] \\
    &\leq \Lip(\nabla\risk)^2 \mean[\norm{h_n - \hstar}_C^2] \leq \Lip(\nabla\risk)^2 \norm{h_1 - \hstar}_C^2,
\end{align*}
for all $1 \leq n \leq N$.
Now simply apply this inequality to \eqref{eq:risk bound} and conclude.

\subsection{Proof of \Cref{prop:t_i for hstar}}
\label{sec:proof of t_i for hstar}

Let $\theta_i = \dotprod{\hstar, e_i}^2$,
and set $A_i = \sum_{j\geq i} \theta_j$.
Since $A_i \to 0$ as $i \to +\infty$, there exists a strictly increasing sequence $\{n_k\}_{k\in\bbN}$ such that $n_k \geq 2$ and $A_{n_k} \leq 2^{-k}$ for all $k \in \bbN$.
Set $n_0 = 1$ and define $\{t_i\}_{i\in\bbN}$ by setting $t_i = 1/k$ for $ n_{k-1} \leq i < n_{k}$.
It is immediate that $t_1 = 1$, $t_i$ is non-increasing and $t_i \to 0$ as $i \to +\infty$. Then
\begin{equation*}
    \sum_{i=1}^\infty \frac{\dotprod{\hstar, e_i}^2}{t_i}
    = \sum_{i=1}^\infty \frac{\theta_i}{t_i}
    = \sum_{k=1}^\infty \sum_{i=n_{k-1}}^{n_k - 1} k \theta_i
    \leq \sum_{k=1}^\infty k A_{n_{k-1}}
    \leq \sum_{k=1}^\infty k 2^{-k + 1} < +\infty.
\end{equation*}
Hence, defining $\Kdist$ by $\Kdist(\{i\}) = t_i - t_{i+1}$, we get $\hstar \in \Chalf(\Hcal)$.

\subsection{Proof of \Cref{prop:linear operator risk good}}
\label{sec:proof of linear operator risk good}
Convexity of $\risk$ is immediate.
To simplify the notation, we now assume that $\Lambda = \partial \Omega$, $f = 0$ and $L$ is linear, from which the general case is a trivial modification.
Recall \eqref{eq:direc deriv linear case}:
\begin{equation}
    \label{eq:direc deriv operator notation}
    D \risk(h ; v) = \dotprod{L[h], L[v]}_{L^2(\Omega)} + \dotprod{\Trace[h], \Trace[v]}_{L^2(\partial \Omega)}.
\end{equation}
Since both $L$ and $\Trace$ are bounded and linear, they have Hilbert adjoints $L^* : L^2(\Omega) \to \bbH^\ell(\Omega)$ and $\Trace^* : L^2(\partial \Omega) \to \bbH^\ell(\Omega)$, allowing us to rewrite \eqref{eq:direc deriv operator notation} as
\begin{align*}
    D \risk (h ; v)
    &= \dotprod{(L^*L + \Trace^*\Trace)[h], v}_{\bbH^\ell(\Omega)}.
\end{align*}    
This equation shows that $\risk$ is Fréchet-differentiable and that $\nabla \risk(h) = (L^*L + \Trace^* \Trace)[h]$, so $\nabla \risk$ is Lipschitz and satisfies $\Lip(\nabla\risk) = \norm{L^*L + \Trace^*\Trace}$.

\subsection{Proof of \Cref{thm:sobolev pre basis}}
\label{sec:proof of sobolev pre basis}
If $\superdomain = \Omega$, then linear independence of $\prebasis$ for any $d \geq 1$ follows directly from the positive-definiteness of $\kernel_{\nu, \eta}$.
If $\superdomain$ is strictly larger than $\Omega$, then one may furthermore use the facts that $\kernel_{\nu, \eta}(\bx_i, \cdot)$ is real-analytic over $\bbR^d\setminus\{\bx_i\}$, that $\bbR^d \setminus \{\bx_1, \ldots, \bx_n\}$ is connected for all $n \in \bbN$ and $d \geq 2$, together with analytic continuation \cite[Theorem~4.8]{stein2003}, to show that $\prebasis$ is still linearly independent.

To show $\prebasis$ has dense span we may assume, without loss of generality, that $\superdomain = \Omega$.
The main result we will use is the following theorem, which is a consequence of \cite[Equation~(4.15)]{rasmussen2008}, \cite[Corollary~10.48]{wendland2004} (see also the discussion before Corollary 11.33) and the extension theorem for fractional Sobolev spaces \cite[Theorem~A.4]{mclean2000}:
\begin{theorem}
    \label{thm:matern RKHS is sobolev}
    Let $\kernel_{\nu, \eta}$ be the Matérn kernel on $\Omega \subset \bbR^d$, with smoothness parameter $\nu$, and let $s = \nu + d/2$.
    Then, the RKHS $\Hcal_{\nu, \eta}$ generated by $\kernel_{\nu, \eta}$ is equivalent to $\bbH^{s}(\Omega)$, meaning that $\Hcal_{\nu, \eta} = \bbH^s(\Omega)$ as a set of functions and their norms are equivalent: there exist constants $c_1$ and $c_2$ such that
    \begin{equation*}
        c_1 \norm{h}_{\bbH^s(\Omega)}
        \leq \norm{h}_{\Hcal_{\nu, \eta}}
        \leq c_2 \norm{h}_{\bbH^s(\Omega)}
        \quad
        \text{for all}
        \quad
        h \in \Hcal_{\nu, \eta}.
    \end{equation*}
\end{theorem}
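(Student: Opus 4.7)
The plan is to establish the equivalence first on all of $\bbR^d$ via a Fourier-analytic identification of the native RKHS of the Matérn kernel with a Bessel-potential space, and then transfer the equivalence to the bounded domain $\Omega$ by combining the restriction/quotient construction of RKHSs on subdomains with the extension theorem for fractional Sobolev spaces on Lipschitz domains.

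First, on $\bbR^d$, I would invoke the standard Fourier characterization of the native space of a translation-invariant, continuous, integrable, positive-definite kernel $\Phi(\bx - \by)$ (Wendland, \emph{Scattered Data Approximation}, Theorem~10.12): the RKHS $\Ncal_\Phi(\bbR^d)$ consists of all $f \in L^2(\bbR^d)$ whose Fourier transform satisfies $\hat{f}/\sqrt{\hat{\Phi}} \in L^2(\bbR^d)$, with inner product
\begin{equation*}
    \dotprod{f, g}_{\Ncal_\Phi} = (2\pi)^{-d/2} \int_{\bbR^d} \frac{\hat{f}(\xi)\, \overline{\hat{g}(\xi)}}{\hat{\Phi}(\xi)} \, \ddrm \xi.
\end{equation*}
Setting $\Phi(\bx) = \kernel_{\nu,\eta}(\bx,\bzero)$, the Fourier transform of the Matérn kernel is the known (cited in \cite[Eq.~(4.15)]{rasmussen2008}) expression $\hat{\Phi}(\xi) = C_{\nu,\eta,d}\,(1 + (\eta |\xi|/\sqrt{2\nu})^2)^{-\nu - d/2}$, which is pointwise comparable, up to multiplicative constants depending only on $(\nu,\eta,d)$, to the Bessel weight $(1+|\xi|^2)^{-s}$ with $s = \nu + d/2$. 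Substituting this into the RKHS inner product yields two-sided bounds on $\norm{f}_{\Ncal_\Phi}^2$ by
\begin{equation*}
    \int_{\bbR^d} (1 + |\xi|^2)^{s}\, \abs{\hat{f}(\xi)}^2 \, \ddrm \xi,
\end{equation*}
which is exactly the (fractional) Sobolev norm $\norm{f}_{\bbH^s(\bbR^d)}^2$. Hence $\Ncal_\Phi(\bbR^d) = \bbH^s(\bbR^d)$ as sets of functions, with norms equivalent up to constants depending only on $\nu,\eta,d$.

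To transfer the equivalence to $\Omega$, I would use the standard restriction construction for RKHSs (Wendland, Theorem~10.46 / Corollary~10.48): the RKHS $\Hcal_{\nu,\eta}$ on $\Omega$ of the restricted kernel $\kernel_{\nu,\eta}\big|_{\Omega\times\Omega}$ is naturally identified with the quotient $\Ncal_\Phi(\bbR^d)/\{F : F|_\Omega \equiv 0\}$, equipped with the infimum norm $\norm{h}_{\Hcal_{\nu,\eta}} = \inf \{\norm{F}_{\Ncal_\Phi(\bbR^d)} : F|_\Omega = h\}$. The forward bound $\norm{h}_{\Hcal_{\nu,\eta}} \leq c_2 \norm{h}_{\bbH^s(\Omega)}$ then follows by choosing $F = Eh$ for a bounded extension operator $E : \bbH^s(\Omega) \to \bbH^s(\bbR^d)$, which exists because $\Omega$ has Lipschitz boundary \cite[Theorem~A.4]{mclean2000}, combined with the $\bbR^d$-level equivalence. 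The reverse bound $\norm{h}_{\bbH^s(\Omega)} \leq c_1^{-1} \norm{h}_{\Hcal_{\nu,\eta}}$ is immediate from the fact that restriction from $\bbH^s(\bbR^d)$ to $\bbH^s(\Omega)$ is bounded, applied to any representative $F$ of $h$ in the quotient.

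The main obstacle I expect is precisely the transition from $\bbR^d$ to $\Omega$: on $\bbR^d$ the Fourier-side comparison is essentially a direct calculation, but on a bounded open subdomain the RKHS is no longer described by a simple Fourier-multiplier formula, and the norm equivalence for the restricted space depends critically on having a bounded linear extension from $\bbH^s(\Omega)$ to $\bbH^s(\bbR^d)$. For non-integer $s = \nu + d/2$ this extension result is nontrivial and is exactly what requires the Lipschitz-boundary hypothesis on $\Omega$; without it, the quotient norm could strictly exceed the Sobolev norm and the two-sided equivalence would fail.
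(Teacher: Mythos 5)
Your proposal is correct and follows essentially the same route as the paper, whose proof is precisely the citation chain you unpack: the Matérn spectral density from \cite[Equation~(4.15)]{rasmussen2008} combined with Wendland's Fourier characterization of native spaces and restriction/quotient construction (\cite[Corollary~10.48]{wendland2004}), plus the Lipschitz-domain extension theorem \cite[Theorem~A.4]{mclean2000} to pass from $\bbR^d$ to $\Omega$. Your identification of the extension operator as the critical ingredient for the two-sided equivalence on $\Omega$ (and the implicit Lipschitz-boundary hypothesis) matches the paper's reasoning exactly.
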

Now we will prove \Cref{thm:sobolev pre basis}.
We start with some notation: If $(\Ecal, \norm{\cdot}_{\Ecal})$ is a given normed space and $E \subset \Ecal$, we denote by $\closure(E ; \Ecal)$ the closure of $E$ with respect to $\norm{\cdot}_{\Ecal}$.
In this notation, we want to show that, under the assumptions of \Cref{thm:sobolev pre basis}, we have $\closure(\spann \Bcal ; \bbH^\ell(\Omega)) = \bbH^\ell(\Omega)$.
The idea will be to show that this closure actually contains a Sobolev space of order at least $\ell$, which will be dense in $\bbH^\ell(\Omega)$.
We divide the proof into four steps:

First, let $s = \nu + d/2$, which is greater than or equal to $\ell$, by assumption.
We claim that $\closure(\spann \Bcal, \bbH^\ell(\Omega)) \supset \closure(\spann \Bcal, \bbH^s(\Omega))$.
Indeed, since the inclusion $\bbH^s(\Omega) \hookrightarrow \bbH^\ell(\Omega)$ is continuous \cite[Theorem~3.27]{mclean2000}, convergence in $\bbH^s(\Omega)$-norm implies convergence in $\bbH^\ell(\Omega)$-norm.
Next, by \Cref{thm:matern RKHS is sobolev}, we have $\closure(\spann \Bcal, \bbH^s(\Omega)) = \closure(\spann \Bcal, \Hcal_{\nu, \eta})$.
Thirdly, let $\bar{\Bcal} = \{\Phi(\bx)\}_{\bx \in \Omega}$.
We then have $\closure(\spann \Bcal, \Hcal_{\nu, \eta}) = \closure(\spann \bar{\Bcal}, \Hcal_{\nu, \eta}) = \Hcal_{\nu, \eta}$.
Indeed, for $\bx \in \Omega$, let $\{\by_{i}\}_{i \in \bbN} \subset \{\bx_i\}_{i\in\bbN}$ be such that $\by_i \to \bx$ as $i \to +\infty$.
Now compute:
\begin{align*}
    & \norm{\kernel_{\nu,\eta}(\bx, \cdot) - \kernel_{\nu,\eta}(\by_i, \cdot)}_{\Hcal_{\nu, \eta}}^2 \\
    &= \dotprod{\kernel_{\nu,\eta}(\bx, \cdot), \kernel_{\nu,\eta}(\bx, \cdot)} + \dotprod{\kernel_{\nu,\eta}(\by_i, \cdot), \kernel_{\nu,\eta}(\by_i, \cdot)}
    \\ &\qquad - 2 \dotprod{\kernel_{\nu,\eta}(\bx, \cdot), \kernel_{\nu,\eta}(\by_i, \cdot)} \\
    &= \kernel_{\nu, \eta}(\bx, \bx) + \kernel_{\nu, \eta}(\by_i, \by_i) - 2 \kernel_{\nu, \eta}(\bx, \by_i),
\end{align*}
where we have used the reproducing property of $\Hcal_{\nu, \eta}$ in the last step.
Now, note that $\kernel_{\nu, \eta}$ is continuous in $\Omega \times \Omega$, which implies that $\lim \kernel_{\nu, \eta}(\by_i, \by_i) = \lim \kernel_{\nu, \eta} (\bx, \by_i) = \kernel_{\nu, \eta}(\bx, \bx)$, thus $\norm{\kernel_{\nu,\eta}(\bx, \cdot) - \kernel_{\nu,\eta}(\by_i, \cdot)}_{\Hcal_{\nu, \eta}}^2$ goes to zero as $i \to +\infty$.
This proves the first equality.
The second one is the known fact that an RKHS can be constructed as the closure of the $\spann$ of the image of the feature map with respect to the RKHS norm, see \cite[Section~10.2]{wendland2004}.
Finally, we have shown that $\bbH^s(\Omega) = \Hcal_{\nu, \eta} \subset \closure (\spann \Bcal, \bbH^\ell(\Omega))$.
Now, recall that since $\Omega$ is open, bounded and has a Lipschitz boundary, we know that $C^\infty(\bar{\Omega})$ is a dense subset of $\bbH^t(\Omega)$ for every $ t \geq 0$ \cite[Theorem~3.29]{mclean2000}.
Therefore, $\closure(\bbH^s(\Omega), \bbH^\ell(\Omega)) \supset \closure (C^\infty(\bar{\Omega}), \bbH^\ell(\Omega)) = \bbH^\ell(\Omega)$.

\subsection{Proof of \Cref{prop:hjb risk properties}}
\label{sec:proof of hjb risk properties}
Since the functions $(t, \bx) \mapsto T - t$ and $(t, \bx) \mapsto d^{-1} \norm{\bx}^2$ belong to $C^{\infty}(\Omegatilde)$ and have bounded derivatives of all orders, integration by parts shows that $\EnforceBC : \bbH^2(\Omegatilde) \to \bbH^2(\Omegatilde)$ is well-defined, linear and continuous.
Consequently, it is also Fréchet-differentiable.
Therefore, what remains to be shown is that $\Ecal : \bbH^2(\Omegatilde) \to \bbR$ is a well-defined, Fréchet-differentiable, nonconvex functional over $\bbH^2(\Omegatilde)$.
\looseness=-1

Start by noting that, since $\abs{\psi(\bp)} \leq 2 B \norm{\bp}$, we have
\begin{equation}
    \label{eq:htilde bound}
    \abs*{\Htilde(\bx, \bp, \bM)} \leq d^{-1}\norm{\bx}^2 + \norm{\bx}\norm{\bp} + \frac{d}{2} B \norm{\bp} + \frac{1}{2} \abs{\trace (\bM)}.
\end{equation}
for any $(\bx, \bp, \bM) \in \bbR^d \times \bbR^d \times \bbR^{d^2}$.
Hence, since $\Omegatilde$ is bounded, for any $h \in \bbH^2(\Omegatilde)$ the map $(t, \bx) \mapsto \Htilde(\bx, \partial_{\bx} h (t, \bx), \partial_{\bx\bx} h(t, \bx))$ is a member of $L^2(\Omegatilde)$, implying that $\Ecal(h)$ is well-defined for any $h \in \bbH^2(\Omegatilde)$.
Furthermore, it is straightforward to see that $\Htilde$ is concave in $\bp$, implying that $\Ecal$ is not a convex function of $h$.

To show $\Ecal$ is Fréchet differentiable, we first note that the partial derivative of $\Htilde$  with respect to $\bp$ is given by
\begin{equation*}
    \partial_{\bp} \Htilde(\bx, \bp, \bM) =
    \bx - \frac{d}{4} \partial_{\bp} \psi (\bp),
    \quad \text{where} \quad
    \partial_{\bp} \psi(\bp) = \begin{cases}
        2 \bp \quad \text{if} \quad  \norm{\bp} \leq B, \\
        2 B \frac{\bp}{\norm{\bp}} \quad \text{if} \quad \norm{\bp} > B.
    \end{cases}
\end{equation*}
Thus, $\partial_{\bp} \Htilde$ is continuous and bounded by $\sqrt{d} \Xbar + \frac{dB}{2}$ in norm (recall that $\Omegatilde = (0, T) \times (-\Xbar, \Xbar)^d$).
Now we take arbitrary $h, u \in \bbH^2(\Omega)$ and $\delta \in \bbR$ to compute

\begin{align*}
    \Ecal(h + \delta u)
    &= \frac{1}{2} \int_{\Omegatilde} \big[\partial_t [h + \delta u](t, \bx) + \Htilde(\bx, \partial_{\bx}[h + \delta u](t, \bx), \partial_{\bx\bx}[h + \delta u](t, \bx)) \big]^2 \drm t \drm \bx \\
    &\hspace{1cm}
    + \frac{1}{2} \int_{[-\Xbar, \Xbar]^d} \big[ [h + \delta u](t, \bx) - d^{-1}\norm{\bx}^2 \big]^2 \drm \bx \\
    &= \frac{1}{2} \int_{\Omegatilde} \Big[ \partial_t [h + \delta u](t, \bx) + \Htilde(\bx, \partial_{\bx} h (t, \bx), \partial_{\bx\bx} h (t, \bx)) \\
    &\hspace{1.5cm}
    + \delta \gap \partial_{\bp} \Htilde(\bx, \bp_\delta(t, \bx), \partial_{\bx\bx} h(t, \bx) )^\trp \gap \partial_{\bx} u (t, \bx) + \frac{\delta}{2} \trace(\partial_{\bx\bx} u(t, \bx)) \Big]^2 \drm t \drm \bx \\
    &\hspace{1cm}
    + \frac{1}{2} \int_{[-\Xbar, \Xbar]^d} \big[ [h + \delta u](T, \bx) - d^{-1}\norm{\bx}^2 \big]^2 \drm \bx,
\end{align*}

where $\bp_\delta(t, \bx)$ lies on the line segment connecting $\partial_{\bx} h (t, \bx)$ and $\partial_{\bx} [h + \delta u](t, \bx)$, and is due to the Mean Value Theorem.
Now we expand:
\begin{align*}
    &\Ecal(h + \delta u) = \Ecal(h) + \delta \Bigg\{
    \int_{\Omegatilde} \big[
        \partial_t h(t, \bx) + \Htilde(\bx, \partial_{\bx} h (t, \bx), \partial_{\bx\bx} h (t, \bx))
    \big] \\
    &\hspace{2.0cm} 
    \cdot \bigg[ \partial_t u(t, \bx) + \partial_{\bp} \Htilde (\bx, \bp_\delta(t, \bx), \partial_{\bx\bx} h(t, \bx))^\trp \partial_{\bx} u (t, \bx) + \frac{1}{2} \trace(\partial_{\bx\bx} u (t, \bx)) \bigg] \drm t \drm \bx \\
    &\hspace{2.5cm}
    + \int_{[-\Xbar, \Xbar]^d} \big[ h(T, \bx) - d^{-1}\norm{\bx}^2 \big] \cdot u(T, \bx) \drm \bx
    \Bigg\} \\
    &\hspace{1.2cm}
    + \frac{\delta^2}{2} \Bigg\{
        \int_{\Omegatilde} \Big[ \partial_t u(t, \bx) +  \partial_{\bp} \Htilde (\bx, \bp_\delta(t, \bx), \partial_{\bx\bx} h(t, \bx))^\trp \partial_{\bx} u(t, \bx) \\
        &\hspace{4.5cm}
        + \frac{1}{2} \trace(\partial_{\bx\bx}u(t, \bx)) \Big]^2 \drm t \drm \bx + \int_{[-\Xbar, \Xbar]^d} u(T, \bx)^2 \drm \bx
    \Bigg\},
\end{align*}
obtaining:
\begin{align*}
    &\frac{\Ecal(h + \delta u) - \Ecal(h)}{\delta}
    = \int_{\Omegatilde} \big[
        \partial_t h(t, \bx) + \Htilde(\bx, \partial_{\bx} h (t, \bx), \partial_{\bx\bx} h(t, \bx))
    \big] \\
    &\hspace{1.5cm}
    \cdot \bigg[ \partial_t u(t, \bx) + \partial_{\bp} \Htilde (\bx, \bp_\delta(t, \bx), \partial_{\bx\bx}h(t, \bx))^\trp \partial_{\bx} u (t, \bx)
    + \frac{1}{2} \trace(\partial_{\bx\bx}u(t, \bx)) \bigg] \drm t \drm \bx \\
    &\hspace{1cm}+ \int_{[-\Xbar, \Xbar]^d} \big[ h(T, \bx) - d^{-1}\norm{\bx}^2 \big] \cdot u(T, \bx) \drm \bx \\
    &\hspace{1cm}
    + \frac{\delta}{2} \Bigg\{
        \int_{\Omegatilde} \big[ \partial_t u (t, \bx) + \partial_{\bp} \Htilde (\bx, \bp_\delta(t, \bx), \partial_{\bx\bx} h(t, \bx))^\trp \partial_{\bx} u(t, \bx) \\
        &\hspace{2cm}
        + \frac{1}{2} \trace(\partial_{\bx\bx}u(t, \bx)) \big]^2 \drm t \drm \bx + \int_{[-\Xbar, \Xbar]^d} u(T, \bx)^2 \drm \bx
    \Bigg\}.
\end{align*}
Thus, since $\partial_{\bp}\Htilde$ is continuous and bounded in norm by a constant, we can apply dominated convergence as we let $\delta \to 0$ to get
\begin{align*}
    D \Ecal (h ; u)
    &= \lim_{\delta \to 0}
    \frac{\Ecal(h + \delta u) - \Ecal(h)}{\delta} \\
    &= \int_{\Omegatilde} \big[
        \partial_t h(t, \bx) + \Htilde(\bx, \partial_{\bx} h (t, \bx), \partial_{\bx\bx} h(t, \bx))
    \big] \\*
    &\hspace{0.5cm}
    \cdot \bigg[ \partial_t u(t, \bx) + \partial_{\bp} \Htilde (\bx, \partial_{\bx} h (t, \bx), \partial_{\bx\bx}h(t, \bx))^\trp \partial_{\bx} u (t, \bx)
    + \frac{1}{2} \trace(\partial_{\bx\bx}u(t, \bx)) \bigg] \drm t \drm \bx \\*
    &\hspace{0.2cm}+ \int_{[-\Xbar, \Xbar]^d} \big[ h(T, \bx) - d^{-1}\norm{\bx}^2 \big] \cdot u(T, \bx) \drm \bx.
\end{align*}
Again, since $\partial_{\bp} \Htilde$ is bounded and $\Htilde$ satisfies \eqref{eq:htilde bound}, we conclude that $D \Ecal(h ; \cdot)$ is a bounded linear functional on $\bbH^2(\Omegatilde)$ for any $h \in \bbH^2(\Omegatilde)$.
Furthermore, by using Cauchy-Schwarz as well as the fact that both $\Htilde$ and $\partial_{\bp} \Htilde$ are Lipschitz continuous in $\bp$ and $\bM$, one may show that the map $h \mapsto D(h ; \cdot)$, from $\bbH^2(\Omegatilde)$ to its dual, is also continuous.
This implies the Fréchet differentiability of $\Ecal$ \cite{pathak2018}.

\end{document}